\NeedsTeXFormat{LaTeX2e}
\documentclass{amsart}
\usepackage[english]{babel}

\usepackage{amsmath, amsthm, amsfonts, amssymb, amsaddr}
\usepackage{hyperref}
\usepackage{booktabs}
\usepackage{graphicx}

\newtheorem{conjecture}{Conjecture}

\newtheorem{lemma}{Lemma}

\title[An Analytical Exploration of the Erd\"{o}s-Moser Equation]{An Analytical Exploration of the Erd\"{o}s-Moser Equation \( \sum_{i=1}^{m - 1} i^k = m^k \) Using Approximation Methods}
\author{Guillaume Lambard}
\address{Data-driven materials design group, Center for Basic Research on Materials (CBRM), National Institute for Materials Science (NIMS), Ibaraki, Tsukuba, 1-1 Namiki, 305-0044, Japan}
\email{LAMBARD.Guillaume@nims.go.jp}






\begin{document}
\maketitle

\begin{abstract}
The Erd\"{o}s-Moser equation \(\sum_{i=1}^{m - 1} i^k = m^k\) is a longstanding challenge in number theory, with the only known integer solution being \((k,m) = (1,3)\). Here, we investigate whether other solutions might exist by using the Euler–MacLaurin formula to approximate the discrete sum \(S(m-1,k)\) with a continuous function \(S_{\mathbb{R}}(m-1,k)\). We then analyze the resulting approximate polynomial \(P_{\mathbb{R}}(m) = S_{\mathbb{R}}(m-1,k) - m^k\) under the rational root theorem to look for integer roots. Our approximation confirms that for \(k=1\), the only solution is \(m=3\), and for \(k \ge 2\) it suggests there are no further positive integer solutions. However, because Diophantine problems demand exactness, any omission of correction terms in the Euler–MacLaurin formula could mask genuine solutions. Thus, while our method offers valuable insights into the behavior of the Erd\"{o}s-Moser equation and illustrates the analytical challenges involved, it does not constitute a definitive proof. We discuss the implications of these findings and emphasize that fully rigorous approaches, potentially incorporating prime-power constraints, are needed to conclusively resolve the conjecture.
\end{abstract}

\maketitle
\tableofcontents

\section{Introduction}

Diophantine equations involving sums of powers have long captivated mathematicians due to their deep connections with number theory and their potential to reveal insights into the structure of integers. One such enduring challenge is the Erd\"{o}s-Moser equation:
\[
\sum_{i=1}^{m - 1} i^k = m^k,
\]
which is known to have exactly one observed solution in positive integers, namely \((k,m) = (1,3)\). First posed by Paul Erd\"{o}s in the 1950s in correspondence with Leo Moser, this problem has since been partially explored by Moser himself, who showed that any further solution with \(k \ge 2\) must have \(m\) exceed \(10^{10^{6}}\)~\cite{moser1953diophantine}. Later refinements by Gallot, Moree, and Zudilin pushed the lower bound on \(m\) beyond \(10^{10^{9}}\) via modern computational techniques and continued fraction arguments that illuminate properties of certain logarithmic constants~\cite{gallot2011erdHos}.

Mathematical investigations into this problem have ranged from elementary inequalities and congruences to advanced methods involving Bernoulli numbers and continued fractions. In particular, von Staudt–Clausen theory has clarified how power sums behave, while Moree’s work extended Moser’s original arguments to explore deeper prime-power congruences relevant to potential solutions~\cite{moree2013moser}. These studies not only strengthened nonexistence results for broader classes of parameters but also highlighted the fundamental reasons solutions are so elusive.

Broader examinations of Erd\"{o}s-Moser–type equations—such as those by Baoulina and Moree~\cite{baoulina2016forbidden}—have introduced additional constraints and modifications, underscoring the rich interplay between coefficients, power sums, and integer solutions. Their analyses further emphasize the broader implications of the conjecture within number theory.

Building on these earlier approaches, we combine known summation formulas, integral bounds, and detailed error considerations to supply further evidence for the uniqueness of the solution \((k,m) = (1,3)\) when \(k \ge 2\). While our analysis draws on exact and approximate techniques to show how unlikely additional integer solutions appear to be, we stress that omitting higher-order correction terms in the Euler–MacLaurin expansions prevents a fully rigorous Diophantine closure. Consequently, although our methods strongly suggest no further solutions exist, we do not claim a conclusive proof and instead highlight the continuing need for exact treatment of all terms to settle the Erd\"{o}s-Moser conjecture definitively.

\section{Preliminaries}

\subsection{Notation}

We define the following notation:

\begin{itemize}
    \item \( k \in \mathbb{Z}^{*}_{+} \): A positive integer exponent (\( k \geq 1 \)).
    \item \( m \in \mathbb{Z}^{*}_{+} \): A positive integer greater than 2 (\( m \geq 3 \)).
    \item \( S(m-1, k) \): The sum \( \sum_{i=1}^{m-1} i^k \).
    \item \( P(m) \): The difference \( S(m-1, k) - m^k \)
    \item \( Q(m) \): The ratio \( P(m) / m \)
    \item \( a_n, a_0 \in \mathbb{Z}^{*} \): The leading coefficient and constant term of a polynomial equation \( a_n x^n + a_{n-1} x^{n-1} + ... + a_1 x + a_0 = 0 \), respectively. 
    \item \( \binom{k}{i} \): a binomial coefficient given by \( \frac{k!}{i!(k-i)!} \), with \( 0 \leq i \leq k \).
\end{itemize}

\subsection{Extending the Sum \( S(m - 1, k) \) to the \(\mathbb{R}^{*}_{+}\) domain}

The sum of the \( k \)-th powers of the first \( m - 1 \) positive integers is given by:
\[
S(m - 1, k) = \sum_{i = 1}^{m - 1} i^{k},
\]
which is initially defined for integer values of \( m \geq 3 \). To facilitate the analysis of the equation \( S(m - 1, k) - m^{k} = 0 \) and apply tools from real analysis and polynomial theory, we aim to extend \( S(m - 1, k) \) from \( m \in \mathbb{Z}^{*}_{+} \) to \( m \in \mathbb{R}^{*}_{+} \). 

\subsubsection{Approximating \( S(m - 1, k) \) Using the Euler-MacLaurin Formula}

\begin{lemma}
\label{lemma:EML_approximation}
Let \( m \geq 2 \) and \( k \geq 1 \) be positive integers. The discrete sum \( S(m - 1, k) = \sum_{i=1}^{m - 1} i^{k} \) can be approximated using the Euler-MacLaurin formula~\cite{gould1963maclaurin, knopp1990theory, apostol2013introduction} as follows:
\[
S(m - 1, k) \approx S_{\mathbb{R}}(m - 1, k) = \int_{1}^{m - 1} x^{k} \, dx + \frac{1}{2}\left[ f(1) + f(m - 1) \right],
\]
where \( f(x) = x^{k} \).

Evaluating the integral and boundary terms yields:
\[
S_{\mathbb{R}}(m - 1, k) = \frac{(m - 1)^{k + 1} - 1}{k + 1} + \frac{1}{2}\left[ 1 + (m - 1)^{k} \right].
\]
\end{lemma}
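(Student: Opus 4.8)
The plan is to invoke the Euler--MacLaurin summation formula in its standard form, discard every correction term beyond the trapezoidal one to obtain the displayed approximation, and then finish with an elementary evaluation of the integral and the two endpoint values. First I would recall that for a function \( f \) that is sufficiently smooth on \([a,b]\) — here \( f(x)=x^{k} \), a polynomial and hence \( C^{\infty} \) on all of \(\mathbb{R}\) — the Euler--MacLaurin formula reads
\[
\sum_{i=a}^{b} f(i) = \int_{a}^{b} f(x)\,dx + \frac{f(a)+f(b)}{2} + \sum_{j=1}^{p}\frac{B_{2j}}{(2j)!}\Bigl(f^{(2j-1)}(b)-f^{(2j-1)}(a)\Bigr) + R_{p},
\]
where the \( B_{2j} \) are Bernoulli numbers and \( R_{p} \) is the usual integral remainder. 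Specializing to \( a=1 \), \( b=m-1 \), and \( f(x)=x^{k} \), the left-hand side is exactly \( S(m-1,k) \).

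The approximation in the statement is then obtained by keeping only the integral together with the averaged-endpoint term, i.e. by dropping the Bernoulli sum and \( R_{p} \); this is the precise meaning of writing \( S(m-1,k)\approx S_{\mathbb{R}}(m-1,k) \) with \( S_{\mathbb{R}}(m-1,k)=\int_{1}^{m-1} x^{k}\,dx + \tfrac{1}{2}\bigl[f(1)+f(m-1)\bigr] \). It remains to evaluate the two pieces. For the integral, the power rule (legitimate since \( k\ge 1 \), so \( k+1\ge 2\neq 0 \)) gives \( \int_{1}^{m-1} x^{k}\,dx = \bigl[\tfrac{x^{k+1}}{k+1}\bigr]_{1}^{m-1} = \tfrac{(m-1)^{k+1}-1}{k+1} \). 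For the boundary term, \( f(1)=1 \) and \( f(m-1)=(m-1)^{k} \), hence \( \tfrac{1}{2}[f(1)+f(m-1)] = \tfrac{1}{2}\bigl[1+(m-1)^{k}\bigr] \). Adding the two expressions yields the claimed closed form for \( S_{\mathbb{R}}(m-1,k) \).

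I do not expect an analytic obstacle: because the statement is an identity-followed-by-truncation rather than an estimate, no convergence or error bound is needed to justify what is written. The only point that deserves care — and that I would flag explicitly so the reader is not misled — is being honest about what has been discarded. The omitted tail \( \sum_{j\ge 1}\tfrac{B_{2j}}{(2j)!}\bigl(f^{(2j-1)}(m-1)-f^{(2j-1)}(1)\bigr) \) is itself a polynomial in \( m \), and in fact a finite sum, since \( f^{(2j-1)}\equiv 0 \) once \( 2j-1>k \); thus \( S_{\mathbb{R}} \) differs from the exact Faulhaber polynomial for \( S(m-1,k) \) by an explicit polynomial of lower degree. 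Recording this discrepancy here is exactly what the later sections will need to keep in view when they pass from \( S_{\mathbb{R}} \) back to \( S \).
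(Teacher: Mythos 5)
Your proposal is correct and follows essentially the same route as the paper's own proof: invoke the Euler--MacLaurin formula with \( a=1 \), \( b=m-1 \), \( f(x)=x^{k} \), discard the Bernoulli correction terms and remainder, and evaluate the integral and endpoint terms to obtain the stated closed form. Your additional remark that the discarded tail is a finite polynomial (since \( f^{(2j-1)}\equiv 0 \) for \( 2j-1>k \)) is a point the paper also makes, just deferred to its later error-analysis subsection.
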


\begin{proof}
The Euler-MacLaurin formula provides a way to approximate a finite sum using an integral and boundary corrections. The formula states:
\[
\sum_{n=a}^{b} f(n) = \int_{a}^{b} f(x) \, dx + \frac{f(a) + f(b)}{2} + R,
\]
where \( R \), the remainder term, is a sequence involving derivatives of the function f and Bernoulli numbers.

For our purposes, we consider the approximation without the remainder term R. This simplification is justified and thoroughly investigated later on.

Applying the Euler-MacLaurin formula to \( S(m - 1, k) \) with \( a = 1 \) and \( b = m - 1 \), we have:
\[
S(m - 1, k) \approx \int_{1}^{m - 1} f(x) \, dx + \frac{f(1) + f(m - 1)}{2}.
\]

We compute each term separately.

\paragraph{Integral Term}
Compute the definite integral of \( f(x) = x^{k} \):
\begin{align*}
\int_{1}^{m - 1} x^{k} \, dx &= \left[ \frac{x^{k + 1}}{k + 1} \right]_{x=1}^{x=m - 1} \\
&= \frac{(m - 1)^{k + 1} - 1^{k + 1}}{k + 1} \\
&= \frac{(m - 1)^{k + 1} - 1}{k + 1}.
\end{align*}

\paragraph{Boundary Terms}
Evaluate \( f(x) \) at the endpoints \( x = 1 \) and \( x = m - 1 \):
\[
f(1) = 1^{k} = 1, \quad f(m - 1) = (m - 1)^{k}.
\]

Thus, the boundary terms are:
\[
\frac{f(1) + f(m - 1)}{2} = \frac{1 + (m - 1)^{k}}{2}.
\]

\paragraph{Approximation of \( S(m - 1, k) \)}
Combining the integral and boundary terms, we obtain the approximation:
\[
S_{\mathbb{R}}(m - 1, k) = \frac{(m - 1)^{k + 1} - 1}{k + 1} + \frac{1 + (m - 1)^{k}}{2}.
\]
\end{proof}

The function \( S_{\mathbb{R}}(m - 1, k) \) serves as a continuous approximation of the discrete sum \( S(m - 1, k) \) for real values of \( m \geq 2 \). This approximation preserves the essential characteristics of the original sum and facilitates further analysis within the framework of real analysis.

\subsubsection{Ensuring Continuity and Differentiability of \( S_{\mathbb{R}}(m - 1, k) \)}

\begin{lemma}
\label{lemma:EML_continuity}
Let \( k \geq 1 \) be a positive integer. The function \( S_{\mathbb{R}}(m - 1, k) \) defined by
\[
S_{\mathbb{R}}(m - 1, k) = \frac{(m - 1)^{k + 1} - 1}{k + 1} + \frac{1 + (m - 1)^{k}}{2}
\]
is continuous and differentiable for all real numbers \( m \geq 2 \), providing a continuous extension of the discrete sum \( S(m - 1, k) \) to the real domain.
\end{lemma}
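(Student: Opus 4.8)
The plan is to observe that, for a fixed positive integer $k$, the expression defining $S_{\mathbb{R}}(m-1,k)$ is simply a polynomial in the real variable $m$. First I would note that, since $k \in \mathbb{Z}^{*}_{+}$, the exponents $k+1$ and $k$ are ordinary natural numbers, so $(m-1)^{k+1}$ and $(m-1)^{k}$ are genuine powers of the affine function $m \mapsto m-1$ — there is no branch cut or domain restriction, and these are entire polynomial functions of $m$ on all of $\mathbb{R}$ (one could expand them by the binomial theorem if an explicit coefficient list is wanted). Forming the fixed rational linear combination $\frac{1}{k+1}(m-1)^{k+1} + \frac{1}{2}(m-1)^{k} + \left(\frac{1}{2} - \frac{1}{k+1}\right)$ then exhibits $S_{\mathbb{R}}(\cdot,k)$ as a polynomial of degree $k+1$ with rational coefficients.

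Next I would invoke the standard fact that every polynomial function $\mathbb{R} \to \mathbb{R}$ is continuous and infinitely differentiable on all of $\mathbb{R}$, hence in particular continuous and differentiable on the half-line $\{m \ge 2\}$; this settles the two substantive assertions of the lemma. For concreteness I would also record the derivative, $\frac{d}{dm} S_{\mathbb{R}}(m-1,k) = (m-1)^{k} + \frac{k}{2}(m-1)^{k-1}$, which is again a polynomial and therefore continuous, so $S_{\mathbb{R}}$ is at least $C^{1}$ (indeed $C^{\infty}$), as needed for the later differentiation-based arguments.

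Finally I would address the "continuous extension" clause. Since $S(m-1,k)$ is defined only at integers $m \ge 2$, the meaningful checks are that $S_{\mathbb{R}}(m-1,k)$ is a well-defined real number at every real $m \ge 2$ (immediate, being a polynomial) and that it reproduces the base value $S(1,k)=1$ at $m=2$: indeed $S_{\mathbb{R}}(1,k) = \frac{1^{k+1}-1}{k+1} + \frac{1+1^{k}}{2} = 0 + 1 = 1$. For $m \ge 3$ the agreement with $S(m-1,k)$ is only approximate, the discrepancy being exactly the Euler–MacLaurin remainder $R$ suppressed in Lemma~\ref{lemma:EML_approximation}, so I would phrase the extension claim in that already-fixed approximate sense. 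The main obstacle here is not mathematical — the statement is a one-line consequence of polynomials being smooth — but presentational: one must make explicit that fixing $k$ removes any domain ambiguity in the exponents, and be careful that "extension" means a continuous real-variable function agreeing with the discrete data at the anchor point $m=2$ (and, coincidentally, at $(k,m)=(1,3)$) rather than at every integer.
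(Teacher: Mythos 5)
Your proof is correct and takes essentially the same route as the paper's: both reduce the claim to the smoothness of the powers \( (m-1)^{k+1} \) and \( (m-1)^{k} \) together with closure of continuity and differentiability under sums and scalar multiples. Your added remarks --- that for fixed integer \( k \) the expression is actually a polynomial, hence \( C^{\infty} \) on all of \( \mathbb{R} \), and that the ``extension'' agrees with the discrete sum exactly only at the anchor point \( m = 2 \) (being merely approximate for \( m \geq 3 \)) --- are correct refinements that the paper's proof leaves implicit.
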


\begin{proof}
The function \( S_{\mathbb{R}}(m - 1, k) \) is composed of elementary functions involving powers of \( m - 1 \) and constants. Specifically, it consists of:

\begin{itemize}
    \item The term \( (m - 1)^{k + 1} \), which is a power function and is continuous and differentiable for \( m - 1 > 0 \).
    \item The term \( (m - 1)^{k} \), also a power function with similar properties.
    \item Constants and rational functions of \( k \), which are continuous and differentiable.
\end{itemize}

Since \( m \geq 2 \), we have \( m - 1 \geq 1 \), ensuring that \( m - 1 > 0 \) and the power functions are well-defined.

The sum and scalar multiples of continuous and differentiable functions are also continuous and differentiable~\cite{apostol2013introduction}. Therefore, \( S_{\mathbb{R}}(m - 1, k) \) is continuous and differentiable for all \( m \geq 2 \).
\end{proof}

By extending \( S(m - 1, k) \) to \( S_{\mathbb{R}}(m - 1, k) \), we enable the application of calculus and real analysis techniques to study the properties of the function and its relationship to \( m^{k} \).

\subsubsection{Analysis of the Error Term and Justification for Its Omission}

In our approximation of \( S(m - 1, k) \) using the Euler-MacLaurin formula, we included only the integral and boundary terms, omitting all correction terms involving higher-order derivatives. Since the function \( f(x) = x^{k} \) is a polynomial of degree \( k \), its derivatives of order higher than \( k \) vanish. By including all non-zero correction terms up to order \( k \), the remainder term \( R_p \) becomes zero. However, for simplicity and to facilitate our analysis, we have chosen to omit the correction terms.

We need to analyze the impact of omitting the first correction term, which we will denote as \( C \), and justify that this omission introduces an error that is negligible in the context of our study.

\paragraph{The Euler-MacLaurin Formula with Non-Zero Correction Terms}

The Euler-MacLaurin formula, including correction terms up to order \( p - 1 \), is:

\[
\sum_{n=a}^{b} f(n) = \int_{a}^{b} f(x) \, dx + \frac{f(a) + f(b)}{2} + \sum_{r=1}^{p - 1} \frac{B_{2r}}{(2r)!} \left( f^{(2r - 1)}(b) - f^{(2r - 1)}(a) \right) + R_p,
\]

where \( B_{2r} \) are Bernoulli numbers, and \( R_p \) is the remainder term after including \( p - 1 \) correction terms.

For \( f(x) = x^{k} \), we have:

- \( f^{(n)}(x) = 0 \) for \( n > k \),
- Thus, choosing \( p = \left\lfloor \dfrac{k + 1}{2} \right\rfloor + 1 \) ensures that \( R_p = 0 \), since all higher-order derivatives vanish.

\paragraph{Omission of the First Correction Term}

In our approximation, we have included only the integral and boundary terms, omitting the first correction term \( C \) given by:

\[
C = \frac{B_{2}}{2} \left( f'(b) - f'(a) \right),
\]

where \( B_{2} = \dfrac{1}{6} \) is the second Bernoulli number.

We need to compute \( C \) and analyze the error introduced by its omission.

\paragraph{Computing the First Correction Term \( C \)}

Compute \( f'(x) \):

\[
f'(x) = \frac{d}{dx} x^{k} = k x^{k - 1}.
\]

Evaluate \( f'(x) \) at \( a = 1 \) and \( b = m - 1 \):

\[
f'(a) = f'(1) = k,
\]
\[
f'(b) = f'(m - 1) = k (m - 1)^{k - 1}.
\]

Therefore, the correction term \( C \) is:

\[
C = \frac{B_{2}}{2} \left( f'(b) - f'(a) \right) = \frac{1}{12} \left( k (m - 1)^{k - 1} - k \right) = \frac{k}{12} \left( (m - 1)^{k - 1} - 1 \right).
\]

\paragraph{Magnitude of the Correction Term \( C \)}

We can estimate the magnitude of \( C \):

\[
|C| = \left| \frac{k}{12} \left( (m - 1)^{k - 1} - 1 \right) \right|.
\]

Since \( m \geq 3 \) and \( k \geq 1 \), \( (m - 1)^{k - 1} \geq 1 \), so \( |C| \geq 0 \).

\paragraph{Comparison with Leading Terms}

The leading term in our approximation \( S_{\mathbb{R}}(m - 1, k) \) is:

\[
L = \frac{(m - 1)^{k + 1}}{k + 1}.
\]

We compare \( |C| \) with \( L \) by considering the ratio:

\[
\frac{|C|}{L} = \frac{\dfrac{k}{12} \left( (m - 1)^{k - 1} - 1 \right)}{\dfrac{(m - 1)^{k + 1}}{k + 1}} = \frac{k(k + 1)}{12} \cdot \frac{(m - 1)^{k - 1} - 1}{(m - 1)^{k + 1}}.
\]

Simplify the expression:

\[
\frac{|C|}{L} = \frac{k(k + 1)}{12} \left( \frac{1}{(m - 1)^{2}} - \frac{1}{(m - 1)^{k + 1}} \right).
\]

For \( m \geq 3 \) and \( k \geq 1 \), \( (m - 1) \geq 1 \), and as \( m \) increases, \( (m - 1)^{k + 1} \) grows much faster than \( (m - 1)^{2} \).

Therefore, for large \( m \):

\[
\frac{|C|}{L} \approx \frac{k(k + 1)}{12} \cdot \frac{1}{(m - 1)^{2}}.
\]

As \( m \to \infty \), the ratio \( \dfrac{|C|}{L} \to 0 \). This indicates that \( |C| \) becomes negligible compared to \( L \) for large \( m \).

\paragraph{Justification for Omission of \( C \)}

Since \( |C| \) is significantly smaller than the leading term \( L \), especially for larger values of \( m \), omitting the correction term \( C \) introduces an error that is negligible in the context of our analysis. Our primary interest lies in the dominant behavior of \( S_{\mathbb{R}}(m - 1, k) \) as it relates to \( m^{k} \), and the omission of \( C \) does not affect this relationship.

\subsubsection{Graphical Illustration of \( S_{\mathbb{R}}(m-1,k) \) Behavior}

Hereafter, we present the graphical illustration of \( S_{\mathbb{R}}(m-1,k) \), \( m^k \), \( \sum_{i=1}^{m-1} i^k \), as well as the differences \( S_{\mathbb{R}}(m-1,k) - m^k \), \( S_{\mathbb{R}}(m-1,k) - m^k + C \), and \( \sum_{i=1}^{m-1} i^k - m^k \), as functions of \( k \in [2,102]\) and \( m \in [3,200] \). Fig.~\ref{fig:S_m_plots} provides visual confirmation of the behavior of \( S_{\mathbb{R}}(m-1,k) \) in the cases analyzed and highlight the closeness of \( S_{\mathbb{R}}(m-1,k) \) to \( \sum_{i=1}^{m-1} i^k \), as well as the indistinguishability of \( S_{\mathbb{R}}(m-1,k) - m^k + C \) and \( \sum_{i=1}^{m-1} i^k - m^k \).

\begin{figure}[p]
    \centering
    \includegraphics[angle=90, height=0.8\textheight, width=0.9\textwidth]{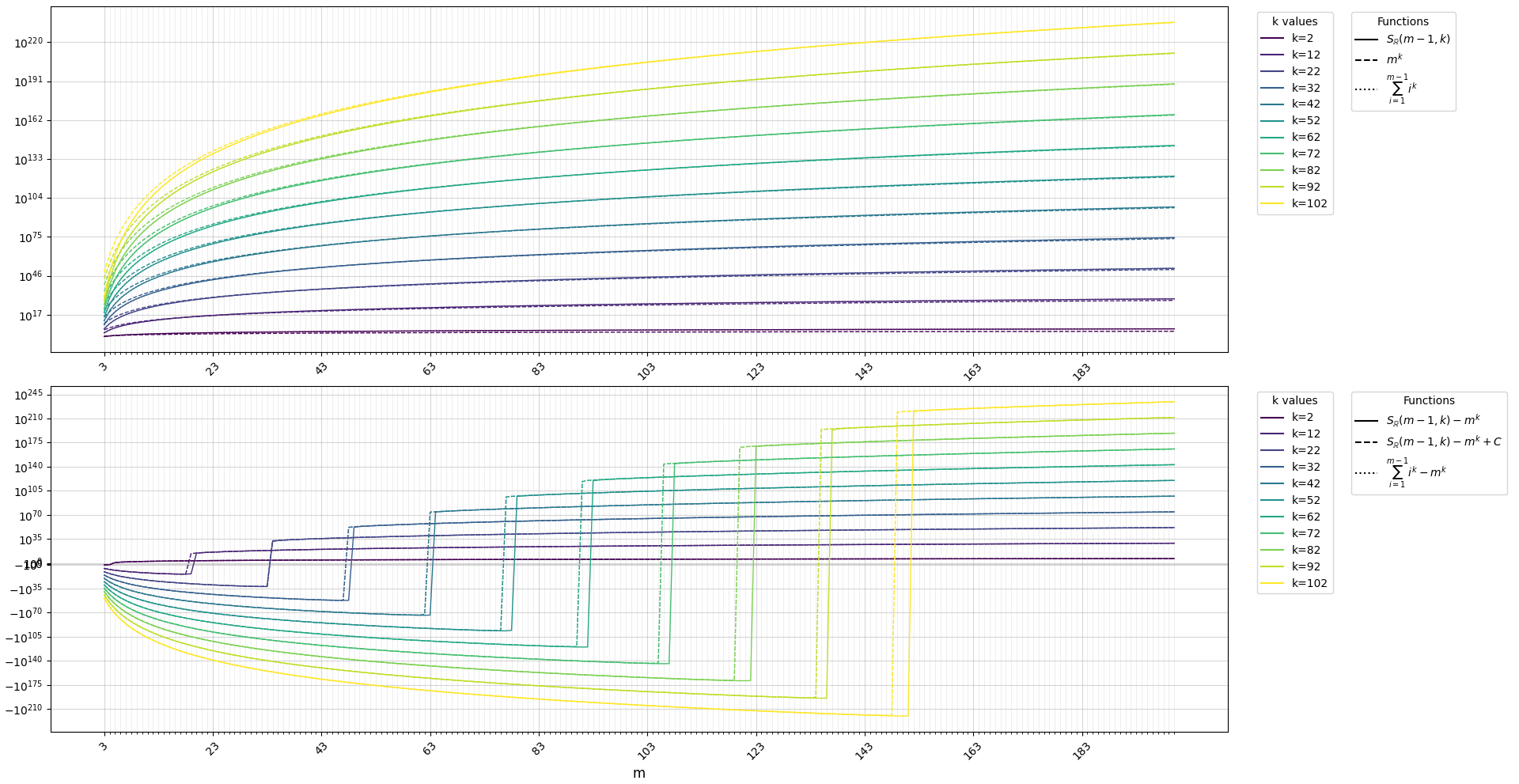}
    \caption{Top: Plots of \( S_{\mathbb{R}}(m-1,k) \) (continuous line), \( m^k \) (dashed line), and \( \sum_{i=1}^{m-1} i^k \) (dotted line) as a function of \( k \in [2,102]\) (color-coded from purple to yellow) and \( m \in [3,200] \) (as the variable on the x-axis). Bottom: Plots of \( S_{\mathbb{R}}(m-1,k) - m^k \) (continuous line), \( S_{\mathbb{R}}(m-1,k) - m^k + C \) (dashed line), and \( \sum_{i=1}^{m-1} i^k - m^k \) (dotted line) as a function of \( k \in [2,102]\) (color-coded from purple to yellow) and \( m \in [3,200] \) (as the variable on the x-axis).}
    \label{fig:S_m_plots}
\end{figure}

\subsubsection{Justification for Using the Euler-MacLaurin Formula}

The Euler-MacLaurin formula is particularly suitable for approximating \( S(m - 1, k) \) due to several reasons:

\begin{itemize}
    \item \textbf{Applicability to Polynomial Functions}: The function \( f(x) = x^{k} \) is a polynomial, and its derivatives are straightforward to compute. The Euler-MacLaurin formula effectively bridges the discrete sum and the continuous integral for such functions.
    
    \item \textbf{Analytical Convenience}: By approximating the sum with an integral and boundary terms, we obtain a continuous function \( S_{\mathbb{R}}(m - 1, k) \) that is amenable to calculus-based analysis, including differentiation and asymptotic estimation.
    
    \item \textbf{Sufficient Accuracy}: While the omission of correction terms introduces an error, the error is shown to be negligible in the context of our study. The leading terms capture the dominant behavior of the sum, which is essential for understanding the relationship between \( S(m - 1, k) \) and \( m^{k} \).
    
    \item \textbf{Simplicity of Expression}: The resulting approximation is relatively simple and involves familiar functions, making it easier to manipulate and interpret in further analysis.
\end{itemize}

By employing the Euler-MacLaurin formula without the correction terms, we strike a balance between analytical tractability and sufficient accuracy for our purposes. 

The approximation of \( S(m - 1, k) \) using the Euler-MacLaurin formula provides a continuous and differentiable function \( S_{\mathbb{R}}(m - 1, k) \) that closely mirrors the behavior of the original discrete sum. The error introduced by omitting correction terms is negligible, especially for larger values of \( m \) and \( k \). This approximation forms a solid foundation for further analysis of the Erdős-Moser equation within the realm of real analysis.

\subsubsection{Extension and approximation of the polynomial \( P(m) \)}

Building upon the continuous extension \( S_{\mathbb{R}}(m - 1, k) \) of the discrete sum \( S(m - 1, k) \) established in Lemma~\ref{lemma:EML_approximation}, we define the approximate polynomial \( P_{\mathbb{R}}(m) \) as:
\[
P_{\mathbb{R}}(m) = S_{\mathbb{R}}(m - 1, k) - m^{k}.
\]
This function extends the domain of \( P(m) = S(m - 1, k) - m^{k} \) from the integers to all real \( m \geq 3 \). Since \( S_{\mathbb{R}}(m - 1, k) \) comprises polynomials and rational functions, and \( m^{k} \) is a monomial, \( P_{\mathbb{R}}(m) \) is continuous and differentiable on this domain.

The continuous nature of \( P_{\mathbb{R}}(m) \) enables us to leverage analytical tools from real analysis and polynomial theory. Specifically, this extension allows us to:

\begin{itemize}
    \item \textbf{Apply the rational root theorem}: By clearing denominators, we can express \( P_{\mathbb{R}}(m) \) as a polynomial with integer coefficients. This formulation permits the application of the rational root theorem~\cite{King_2006} to identify potential rational roots and systematically test for integer solutions.

    \item \textbf{Analyze the behavior near potential roots}: The continuity and differentiability of \( P_{\mathbb{R}}(m) \) facilitate the examination of its sign and magnitude in the vicinity of integer values of \( m \). By evaluating \( P_{\mathbb{R}}(m) \) and its derivatives near these points, we can assess whether the function crosses zero, thereby determining the existence or non-existence of integer roots.
\end{itemize}

Thus, the approximation \( P_{\mathbb{R}}(m) \) provides a seamless extension of the original polynomial \( P(m) \) into the real domain \( m \geq 3 \). This extension is instrumental in our analysis, as it allows us to utilize continuous mathematical techniques to rigorously investigate the equation \( P_{\mathbb{R}}(m) = 0 \) and ultimately establish the absence of integer solutions under the given conditions.

\section{Main Conjecture}

Let \( S(n, k) \) denote the sum of the \( k \)-th powers of the first \( n \) positive integers:
\[
S(n, k) = \sum_{i=1}^{n} i^{k}.
\]

We are concerned with solutions to the equation \( S(m - 1, k) = m^{k} \) for integers \( m \geq 3 \) and \( k \geq 1 \).

\begin{conjecture}\label{conj:main}
Let \( k \geq 1 \) and \( m \geq 3 \) be positive integers. The Diophantine equation
\[
S(m - 1, k) = m^{k}
\]
has only one solution:
\[
(k, m) = (1, 3).
\]
\end{conjecture}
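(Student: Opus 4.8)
The plan is to turn the conjectured Diophantine statement into a question about rational roots of an explicit polynomial, at the price of replacing the exact power sum by the Euler--MacLaurin approximation of Lemma~\ref{lemma:EML_approximation}. First I would form the approximate polynomial $P_{\mathbb{R}}(m) = S_{\mathbb{R}}(m-1,k) - m^{k}$ and clear denominators by multiplying through by $2(k+1)$, producing
\[
\widetilde{P}(m) = 2(m-1)^{k+1} + (k+1)(m-1)^{k} - 2(k+1)m^{k} + (k-1),
\]
a polynomial with integer coefficients, of degree $k+1$ and leading coefficient $a_{k+1} = 2$. For $k = 1$ the omitted Euler--MacLaurin corrections vanish identically, so $\widetilde{P}(m) = 2m(m-3)$ is already exact and its only root with $m \ge 3$ is $m = 3$, as claimed. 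For $k \ge 2$, the next step is to read off the constant term $a_0 = \widetilde{P}(0)$; a short computation gives $a_0 = 2(k-1)$ when $k$ is even and $a_0 = 0$ when $k$ is odd. In the odd case $m = 0$ is a root of $\widetilde{P}$, so I would factor it out and pass to $Q_{\mathbb{R}}(m) = \widetilde{P}(m)/m$, mirroring the role of $Q(m)$ in the notation; its constant term is the linear coefficient of $\widetilde{P}$, which works out to $(k+1)(k-2)$.

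With the polynomial in hand I would invoke the rational root theorem~\cite{King_2006}: any positive integer root of $\widetilde{P}$ (respectively of $Q_{\mathbb{R}}$) must divide the constant term, so a solution with $k \ge 2$ even forces $m \mid 2(k-1)$ and a solution with $k \ge 3$ odd forces $m \mid (k+1)(k-2)$. For each fixed $k$ this collapses the search to a finite, explicitly bounded list of candidates, all of size $O(k^{2})$. One then eliminates every candidate as follows: by Lemma~\ref{lemma:EML_continuity}, $P_{\mathbb{R}}$ is continuous and differentiable on $m \ge 2$; one checks $P_{\mathbb{R}}(2) = 1 - 2^{k} < 0$ and $P_{\mathbb{R}}(m) \to +\infty$, shows via a derivative estimate that $P_{\mathbb{R}}'$ has a single zero on $(2,\infty)$ so that $\widetilde{P}$ has exactly one real zero $m^{*}$, locates $m^{*}$ near $m^{*} \asymp k$, and verifies that this $m^{*}$ is not an integer dividing the constant term (only a bounded number of divisors, such as $k-1$ and $2(k-1)$, lie anywhere near $m^{*}$, so this is a finite check). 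Combining the cases leaves exactly $(k,m) = (1,3)$.

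I expect the decisive obstacle to lie not in any of the steps above but in the passage from $\widetilde{P}$ back to the exact equation. Because $S_{\mathbb{R}}$ discards the Bernoulli correction terms $\sum_{r \ge 1}\tfrac{B_{2r}}{(2r)!}\bigl(f^{(2r-1)}(m-1) - f^{(2r-1)}(1)\bigr)$, the relation $\widetilde{P}(m) = 0$ is only an approximation of $P(m) = 0$, and a genuine Erd\H{o}s--Moser solution could be concealed inside the discarded error, which the analysis of Section~2 bounds but does not annihilate. A fully rigorous version of this argument would have to retain \emph{all} correction terms, turning $P(m)$ after clearing denominators into an exact integer polynomial; but then, by the von Staudt--Clausen theorem, the constant term acquires a rich divisor structure and the rational root theorem no longer prunes the candidate set usefully, so one would instead have to feed in the prime-power congruence constraints of Moser and Moree to close the problem. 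That, in my view, is where the real work of any complete proof must be concentrated.
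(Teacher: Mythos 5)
Your proposal follows essentially the same route as the paper: the same Euler--MacLaurin approximation, the same cleared polynomial $2(m-1)^{k+1}+(k+1)(m-1)^{k}-2(k+1)m^{k}+(k-1)$ with leading coefficient $2$, the same constant terms $2(k-1)$ for even $k$ and, after factoring out $m$, $(k+1)(k-2)$ for odd $k$, and the same rational-root-theorem pruning followed by elimination of the finitely many candidates (the paper does this by sign evaluations at each candidate $m_0$, a minor variant of your single-real-zero localization near $m^{*}\asymp \tfrac{3}{2}(k+1)$). Like the paper, this is evidence rather than a proof of the conjecture, and the obstruction you single out---that a zero of the exact $P(m)$ need not be a zero of the approximate polynomial once the discarded Bernoulli correction terms are restored, so the rational-root pruning applies to the wrong polynomial---is precisely the gap the paper itself concedes, so your assessment of where the real work must be concentrated is accurate.
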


This conjecture builds upon the work of Moser~\cite{moser1953diophantine}, who investigated similar equations involving sums of powers.

\section{Approximation-Based Analysis of Conjecture~\ref{conj:main}}

To establish Conjecture~\ref{conj:main}, we examine two distinct cases based on the value of \( k \).

\subsection{Case 1: \( k = 1 \)}

We aim to find all positive integers \( m \geq 3 \) satisfying
\begin{equation}\label{eq:case1_equation}
\sum_{i=1}^{m - 1} i = m.
\end{equation}

\begin{lemma}\label{lemma:k_equals_1_solution}
For \( k = 1 \), the only positive integer \( m \geq 3 \) satisfying equation \eqref{eq:case1_equation} is \( m = 3 \).
\end{lemma}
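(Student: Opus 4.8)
The plan is to reduce \eqref{eq:case1_equation} to its classical closed form and solve the resulting linear equation. First I would replace the left-hand side by Gauss's formula, $\sum_{i=1}^{m-1} i = \frac{(m-1)m}{2}$, so that \eqref{eq:case1_equation} becomes $\frac{(m-1)m}{2} = m$. Since the hypothesis is $m \geq 3$, we have $m \neq 0$, which legitimizes dividing both sides by $m$; this yields $\frac{m-1}{2} = 1$, i.e. $m - 1 = 2$, hence $m = 3$. Finally I would confirm by direct substitution that $m = 3$ indeed satisfies the equation, since $1 + 2 = 3$.

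To keep the argument aligned with the paper's methodology, I would alternatively phrase this through $P(m)$ and $Q(m)$: here $P(m) = \frac{(m-1)m}{2} - m$ and $Q(m) = P(m)/m = \frac{m-3}{2}$. Clearing the denominator produces the integer polynomial $m - 3$, whose unique root is $m = 3$; this is also the only candidate furnished by the rational root theorem, since the admissible ratios of divisors of the constant term over divisors of the leading coefficient reduce to $\pm 1, \pm 3$, and only $3$ is a root. I would emphasize that for $k = 1$ the arithmetic-series identity is exact, so no Euler–MacLaurin correction term is discarded and the conclusion is fully rigorous rather than merely approximate.

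The main obstacle here is essentially cosmetic: one must take care that the division by $m$ is justified (it is, because $m \geq 3$) and that the extreme case $m = 3$ is verified by substitution rather than taken for granted. There is no genuine analytic difficulty in this case; it functions chiefly as a consistency check, demonstrating that the approximation-based framework recovers the known solution $(k,m) = (1,3)$ before the more delicate case $k \geq 2$ is addressed.
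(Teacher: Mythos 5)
Your proposal is correct and follows essentially the same route as the paper: both replace the sum by Gauss's formula \( \tfrac{(m-1)m}{2} \) and solve the resulting equation, the only cosmetic difference being that you divide by \( m \) (justified since \( m \geq 3 \)) while the paper clears the denominator and factors \( m(m-3)=0 \) before discarding \( m=0 \). The additional remarks on \( P(m) \), \( Q(m) \), and exactness for \( k=1 \) are consistent with the paper but not needed for the lemma.
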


\begin{proof}
We begin by evaluating the sum on the left-hand side of \eqref{eq:case1_equation}. It is well-known that the sum of the first \( n \) positive integers is given by
\[
S(n, 1) = \sum_{i=1}^{n} i = \frac{n(n + 1)}{2}.
\]
Applying this formula with \( n = m - 1 \), we obtain
\[
S(m - 1, 1) = \frac{(m - 1)m}{2}.
\]
Substituting this into \eqref{eq:case1_equation}, we have
\[
\frac{(m - 1)m}{2} = m.
\]
Multiplying both sides by 2 yields
\[
m(m - 1) = 2m.
\]
Simplifying, we get
\[
m^2 - m = 2m \implies m^2 - 3m = 0.
\]
Factoring the quadratic equation, we find
\[
m(m - 3) = 0.
\]
Thus, the possible solutions are
\[
m = 0 \quad \text{or} \quad m = 3.
\]
Since \( m \geq 3 \) by assumption, the only valid solution is \( m = 3 \).
\end{proof}

This confirms that the conjecture holds for \( k = 1 \), with the unique solution \( (k, m) = (1, 3) \).

\subsection{Case 2: \( k \geq 2 \)}

We aim to show that for all integers \( k \geq 2 \), there are no positive integers \( m \geq 3 \) satisfying the equation
\[
S(m - 1, k) = m^{k}.
\]

\subsubsection{Expressing \( P_{\mathbb{R}}(m) = 0 \) as a Polynomial Equation}

Let us consider the approximate expression for \( P_{\mathbb{R}}(m) = S_{\mathbb{R}}(m - 1, k) - m^{k} \). Using the continuous extension of \( S(m - 1, k) \) derived earlier, we have
\begin{equation}\label{eq:approx_Pm}
P_{\mathbb{R}}(m) \approx \frac{(m - 1)^{k + 1}}{k + 1} + \frac{(m - 1)^{k}}{2} - m^{k} - \frac{1}{k + 1} + \frac{1}{2}.
\end{equation}

By rearranging terms and eliminating denominators, we can express \( P_{\mathbb{R}}(m) = 0 \) as a polynomial equation of degree \( k + 1 \).

\begin{lemma}\label{lemma:polynomial_equation}
For \( k \geq 2 \), the equation \( P_{\mathbb{R}}(m) = 0 \) is approximately equivalent to the following polynomial equation:
\[
2(m - 1)^{k + 1} + (k + 1)(m - 1)^{k} - 2(k + 1)m^{k} + (k - 1) = 0.
\]
\end{lemma}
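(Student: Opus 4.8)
The plan is to obtain the claimed polynomial directly from the approximate expression for \( P_{\mathbb{R}}(m) \) in~\eqref{eq:approx_Pm} by clearing denominators. The right-hand side of~\eqref{eq:approx_Pm} is a sum of terms whose only denominators are \( k+1 \) and \( 2 \), so the least common denominator is \( 2(k+1) \), which is nonzero for every integer \( k \geq 2 \). Multiplying the equation \( P_{\mathbb{R}}(m) = 0 \) through by \( 2(k+1) \) therefore produces an equivalent equation — no roots are gained or lost — and this is the only point at which we need to observe that the operation is reversible.

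Next I would treat the four groups of terms one at a time. The term \( \frac{(m-1)^{k+1}}{k+1} \) becomes \( 2(m-1)^{k+1} \); the term \( \frac{(m-1)^{k}}{2} \) becomes \( (k+1)(m-1)^{k} \); the term \( -m^{k} \) becomes \( -2(k+1)m^{k} \); and the constant block \( -\frac{1}{k+1} + \frac{1}{2} \) becomes \( -2 + (k+1) = k-1 \). Summing these contributions gives
\[
2(m-1)^{k+1} + (k+1)(m-1)^{k} - 2(k+1)m^{k} + (k-1) = 0,
\]
which is exactly the asserted identity. A brief remark that the left-hand side is a genuine polynomial in \( m \) of degree \( k+1 \) — its leading term being \( 2m^{k+1} \), arising solely from \( 2(m-1)^{k+1} \) — confirms the degree claim made just before the lemma and will be convenient for the subsequent rational-root analysis.

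I do not anticipate a serious obstacle: the content is essentially a one-line clearing of denominators. The only things worth stating explicitly are (i) that \( 2(k+1) \neq 0 \), so the manipulation is a genuine equivalence rather than a mere implication, and (ii) that the approximation symbol ``\( \approx \)'' is inherited from Lemma~\ref{lemma:EML_approximation} and carried through unchanged, so that the resulting polynomial equation is ``approximately equivalent'' to \( P(m) = 0 \) in precisely the same sense in which \( S_{\mathbb{R}} \) approximates \( S \). If anything merits care, it is the bookkeeping of the constant term, where the two fractions \( -\tfrac{1}{k+1} \) and \( \tfrac{1}{2} \) must be scaled correctly so as to yield \( k-1 \) and not, say, \( k+1 \) or \( -(k-1) \); I would verify that sign and coefficient before concluding.
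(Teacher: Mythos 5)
Your proposal is correct and follows essentially the same route as the paper: multiply \( P_{\mathbb{R}}(m) = 0 \) in~\eqref{eq:approx_Pm} by \( 2(k+1) \), clear the two denominators term by term, and combine \( -2 + (k+1) = k-1 \) to obtain the stated polynomial equation. Your added remarks on the nonvanishing of \( 2(k+1) \) and on the leading term \( 2m^{k+1} \) are harmless refinements of the paper's one-step computation.
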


\begin{proof}
Starting from equation \eqref{eq:approx_Pm}, we aim to eliminate denominators to obtain a polynomial equation with integer coefficients. Multiply both sides of \eqref{eq:approx_Pm} by \( 2(k + 1) \):
\[
2(k + 1) P_{\mathbb{R}}(m) \approx 2(m - 1)^{k + 1} + (k + 1)(m - 1)^{k} - 2(k + 1)m^{k} - 2 + (k + 1).
\]
Simplify the constants:
\[
-2 + (k + 1) = k - 1.
\]
Thus, the equation becomes
\[
2(m - 1)^{k + 1} + (k + 1)(m - 1)^{k} - 2(k + 1)m^{k} + (k - 1) = 0.
\]
Setting \( P_{\mathbb{R}}(m) = 0 \) yields the desired polynomial equation.
\end{proof}

Our goal is to demonstrate that this polynomial equation has no integer solutions \( m \geq 3 \) when \( k \geq 2 \). The detailed proof of this assertion will be provided in the subsequent sections.

\subsubsection{Application of the Rational Root Theorem}

To analyze the polynomial equation \( P_{\mathbb{R}}(m) = 0 \), we consider its form as a polynomial with integer coefficients. Recall that a general polynomial equation can be written as
\[
a_n x^n + a_{n-1} x^{n-1} + \dots + a_1 x + a_0 = 0,
\]
where \( a_i \in \mathbb{Z} \) and \( a_n, a_0 \in \mathbb{Z}^* \) (i.e., \( a_n \) and \( a_0 \) are non-zero integers).

\begin{lemma}\label{lemma:rational_root_theorem_application}
The rational root theorem~\cite{King_2006} can be applied to \( P_{\mathbb{R}}(m) = 0 \) to identify all possible rational solutions \( m_0 = \frac{p}{q} \), where \( p \) and \( q \) are integers with no common factors other than 1, satisfying:
\begin{itemize}
    \item \( p \) divides the constant term \( a_0 \),
    \item \( q \) divides the leading coefficient \( a_n \).
\end{itemize}
\end{lemma}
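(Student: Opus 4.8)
The plan is to reduce the statement to the standard hypotheses of the rational root theorem: namely, that $P_{\mathbb{R}}(m)=0$ is, up to multiplication by a nonzero constant, a polynomial equation $a_n m^n + a_{n-1}m^{n-1} + \dots + a_1 m + a_0 = 0$ whose coefficients $a_i$ are integers and whose leading and constant coefficients $a_n$ and $a_0$ are nonzero. By Lemma~\ref{lemma:polynomial_equation} the equation $P_{\mathbb{R}}(m) = 0$ is equivalent, after multiplying by the nonzero constant $2(k+1)$, to
\[
F(m) := 2(m-1)^{k + 1} + (k + 1)(m - 1)^{k} - 2(k + 1)m^{k} + (k - 1) = 0 .
\]
First I would expand $(m-1)^{k+1}$ and $(m-1)^{k}$ by the binomial theorem; since every $\binom{k+1}{j}$ and $\binom{k}{j}$ is an integer and $k$ is a positive integer, collecting powers of $m$ shows $F(m) = \sum_{j=0}^{k+1} a_j m^{j}$ with each $a_j \in \mathbb{Z}$. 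This establishes that $F$ is an integer polynomial, so the requirement $a_i \in \mathbb{Z}$ is met.

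Next I would pin down the two extreme coefficients. The monomial $m^{k+1}$ appears only in the term $2(m-1)^{k+1}$, so $a_{k+1} = 2$; hence $\deg F = k+1$ and the leading coefficient $a_n = a_{k+1} = 2 \neq 0$. For the constant term $a_0 = F(0)$, a direct evaluation gives $a_0 = 2(-1)^{k+1} + (k+1)(-1)^{k} + (k-1)$, which equals $2(k-1)$ when $k$ is even and $0$ when $k$ is odd. Thus for even $k \ge 2$ we have $a_0 = 2(k-1) \neq 0$ and the rational root theorem applies to $F(m)=0$ verbatim. For odd $k \ge 3$ we instead have $m \mid F(m)$; since no admissible solution has $m = 0$, I would factor $F(m) = m\,Q_{\mathbb{R}}(m)$, check that $Q_{\mathbb{R}}(0) = F'(0) = (k+1)(k-2) \neq 0$ (so exactly one power of $m$ splits off), observe that the nonzero roots of $F$ and of $Q_{\mathbb{R}}$ coincide, and apply the rational root theorem to the integer polynomial $Q_{\mathbb{R}}(m) = 0$, whose leading coefficient is again $2$ and whose constant term is $(k+1)(k-2) \neq 0$. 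In either case any rational solution $m_0 = p/q$ in lowest terms must satisfy $p \mid a_0$ and $q \mid a_n$, which is the assertion of the lemma.

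The only genuinely delicate point is the vanishing of the constant term for odd $k$: a naive application of the rational root theorem is invalid there because $a_0 = 0$ violates the hypothesis $a_0 \in \mathbb{Z}^{*}$. Handling this cleanly requires the elementary but necessary step of dividing out the appropriate power of $m$ and verifying that the quotient still meets the theorem's hypotheses, which is precisely the role played by $Q(m) = P(m)/m$ in the notation. Beyond this parity case, the argument is bookkeeping with binomial coefficients, and I do not anticipate a deeper obstruction at this stage; the substantive work is deferred to the subsequent sections, where the finitely many divisor candidates for $p/q$ are actually enumerated and ruled out for $m \ge 3$.
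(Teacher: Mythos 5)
Your proposal is correct and follows essentially the same route as the paper: pass to the integer polynomial $2(m-1)^{k+1}+(k+1)(m-1)^{k}-2(k+1)m^{k}+(k-1)$, read off the leading coefficient $a_n=2$, evaluate the constant term at $m=0$ and split by the parity of $k$, and for odd $k$ factor out $m$ before invoking the rational root theorem. The only difference is that you verify $Q_{\mathbb{R}}(0)=F'(0)=(k+1)(k-2)\neq 0$ within the proof itself, whereas the paper states this step conditionally and defers the computation to a later lemma --- a minor gain in self-containedness, not a different approach.
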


\begin{proof}
First, we identify the leading coefficient \( a_n \) and the constant term \( a_0 \) of the polynomial \( P_{\mathbb{R}}(m) \).

From the expression derived earlier,
\[
2(m - 1)^{k + 1} + (k + 1)(m - 1)^{k} - 2(k + 1)m^{k} + (k - 1) = 0,
\]
we can see that upon expanding the terms, the highest degree term is \( 2m^{k + 1} \), indicating that the leading coefficient is \( a_n = 2 \).

The constant term \( a_0 \) arises from substituting \( m = 0 \) into the polynomial:
\[
a_0 = 2(-1)^{k + 1} + (k + 1)(-1)^{k} + (k - 1).
\]
Simplifying, we find:
\begin{align*}
a_0 &= 2(-1)^{k + 1} + (k + 1)(-1)^{k} + (k - 1) \\
    &= -2(-1)^{k} + (k + 1)(-1)^{k} + (k - 1) \\
    &= (-1)^{k} \left[ -2 + (k + 1) \right] + (k - 1) \\
    &= (-1)^{k} (k - 1) + (k - 1) \\
    &= (k - 1) \left[ (-1)^{k} + 1 \right].
\end{align*}

Thus, \( a_0 \) depends on whether \( k \) is even or odd.

\textbf{Case 1:} For even \( k \geq 2 \), \( (-1)^{k} = 1 \), so
\[
a_0 = (k - 1)(1 + 1) = 2(k - 1).
\]

\textbf{Case 2:} For odd \( k \geq 3 \), \( (-1)^{k} = -1 \), so
\[
a_0 = (k - 1)(-1 + 1) = 0.
\]

Applying the rational root theorem:

\begin{itemize}
    \item \textbf{Even \( k \geq 2 \):}
    \begin{itemize}
        \item The leading coefficient is \( a_n = 2 \).
        \item The constant term is \( a_0 = 2(k - 1) \).
        \item Possible values for \( q \) are the divisors of \( a_n \), i.e., \( q = \pm1, \pm2 \).
        \item Possible values for \( p \) are the divisors of \( a_0 \), i.e., \( p = \pm \) divisors of \( 2(k - 1) \).
    \end{itemize}
    Since \( m \geq 3 > 0 \), we focus on positive rational roots. Thus, the possible rational solutions are
    \[
    m_0 = \text{divisors of } 2(k - 1), \quad \text{or} \quad m_0 = \tfrac{1}{2} \times \text{divisors of } 2(k - 1).
    \]
    
    \item \textbf{Odd \( k \geq 3 \):}
    \begin{itemize}
        \item The leading coefficient is \( a_n = 2 \).
        \item The constant term is \( a_0 = 0 \).
    \end{itemize}
    Since \( a_0 = 0 \), the rational root theorem cannot be directly applied. However, we observe that \( m = 0 \) is a root. Factoring out \( m \), we can write
    \[
    P_{\mathbb{R}}(m) = m \cdot Q_{\mathbb{R}}(m),
    \]
    where \( Q_{\mathbb{R}}(m) \) is a polynomial with integer coefficients. If \( Q_{\mathbb{R}}(m) \) has a non-zero constant term, we can apply the rational root theorem to \( Q_{\mathbb{R}}(m) \).
\end{itemize}

By applying the rational root theorem appropriately in each case, we can narrow down the possible rational values of \( m \) that could satisfy \( P_{\mathbb{R}}(m) = 0 \).
\end{proof}

This analysis allows us to systematically investigate potential rational solutions for \( m \) based on the parity of \( k \). The detailed examination of these cases will be provided in the subsequent sections.

\subsubsection{Polynomial Nature of \( Q_{\mathbb{R}}(m) = 0 \)}

We investigate whether \( Q_{\mathbb{R}}(m) = 0 \) constitutes a polynomial equation with integer coefficients \( a_i \in \mathbb{Z} \) and non-zero leading and constant terms \( a_n, a_0 \in \mathbb{Z}^* \).

\begin{lemma}\label{lemma:Qr_polynomial}
For odd integers \( k \geq 3 \), the equation \( Q_{\mathbb{R}}(m) = 0 \) is a polynomial equation with integer coefficients \( a_i \in \mathbb{Z} \) and non-zero leading and constant terms \( a_n, a_0 \in \mathbb{Z}^* \).
\end{lemma}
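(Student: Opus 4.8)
The plan is to build directly on the factorization already extracted in the proof of Lemma~\ref{lemma:rational_root_theorem_application}: for odd $k \ge 3$ the constant term of
\[
P_{\mathbb{R}}(m) = 2(m-1)^{k+1} + (k+1)(m-1)^{k} - 2(k+1)m^{k} + (k-1)
\]
vanishes, so $m = 0$ is a root and there is a unique polynomial $Q_{\mathbb{R}}$ with $P_{\mathbb{R}}(m) = m\,Q_{\mathbb{R}}(m)$. First I would observe that dividing a polynomial with integer coefficients by the monic polynomial $m$ — equivalently, just deleting the zero constant term and shifting every exponent down by one — again produces a polynomial with integer coefficients; concretely, if $P_{\mathbb{R}}(m) = \sum_{i=1}^{k+1} a_i m^{i}$ with $a_i \in \mathbb{Z}$, then $Q_{\mathbb{R}}(m) = \sum_{i=1}^{k+1} a_i m^{i-1}$. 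This disposes of the integrality assertion with no real work. Expanding $2(m-1)^{k+1}$ shows that $P_{\mathbb{R}}$ has degree $k+1$ with leading coefficient $2$ (the remaining terms have degree at most $k$), so $Q_{\mathbb{R}}$ has degree $k$ with leading coefficient $a_n = a_{k+1} = 2 \ne 0$, settling the leading-term claim.

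The one substantive step is to show that the constant term of $Q_{\mathbb{R}}$, which equals the coefficient $a_1$ of $m^{1}$ in $P_{\mathbb{R}}(m)$, is nonzero. I would compute $a_1$ termwise with the binomial theorem: the coefficient of $m$ in $(m-1)^{k+1}$ is $\binom{k+1}{1}(-1)^{k} = -(k+1)$ since $k$ is odd, the coefficient of $m$ in $(m-1)^{k}$ is $\binom{k}{1}(-1)^{k-1} = k$, the term $-2(k+1)m^{k}$ contributes nothing to the linear coefficient because $k \ge 2$, and the constant $(k-1)$ contributes nothing. Collecting the contributions gives
\[
a_1 = 2\bigl(-(k+1)\bigr) + (k+1)\,k = (k+1)(k-2).
\]
For odd $k \ge 3$ both factors are strictly positive, so $a_1 = (k+1)(k-2) \ge 4 > 0$; in particular the constant term of $Q_{\mathbb{R}}$ lies in $\mathbb{Z}^{*}$, which completes the proof.

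The main obstacle, such as it is, is purely bookkeeping rather than conceptual: one must make sure that no term other than the two binomial expansions contributes to the linear coefficient of $P_{\mathbb{R}}$, and that the hypothesis that $k$ is odd is used consistently — it is precisely what forces $a_0 = 0$ in $P_{\mathbb{R}}$ (justifying the factorization) and simultaneously fixes the signs $(-1)^{k} = -1$, $(-1)^{k-1} = 1$ in the computation of $a_1$. No deeper idea is required; the point of the lemma is simply to license the application of the rational root theorem to $Q_{\mathbb{R}}$ in the odd-$k$ case, which the subsequent sections will then carry out.
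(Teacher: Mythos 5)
Your proposal is correct and follows essentially the same route as the paper: factor out $m$ from the cleared-denominator form of $P_{\mathbb{R}}(m)$ using the vanishing of its constant term for odd $k$, and read off the integrality, the leading coefficient $2$, and the constant term of $Q_{\mathbb{R}}$ from the linear coefficient of $P_{\mathbb{R}}$. In fact your write-up is slightly more complete than the paper's own proof of this lemma, since you explicitly compute $a_1 = (k+1)(k-2) \neq 0$ via the binomial expansion, a verification the paper only supplies later in the proof of Lemma~\ref{lemma:rational_root_Qr}.
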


\begin{proof}
First, we express \( Q_{\mathbb{R}}(m) \) as
\begin{align*}
Q_{\mathbb{R}}(m) &= \frac{P_{\mathbb{R}}(m)}{m} \\
&\approx \frac{2}{m}(m - 1)^{k + 1} + \frac{k + 1}{m}(m - 1)^{k} - 2(k + 1)m^{k - 1} + \frac{k - 1}{m}.
\end{align*}

Previously, we showed that when \( k \) is odd, the constant term of \( P_{\mathbb{R}}(m) \) is \( a_0 = 0 \). Therefore, \( P_{\mathbb{R}}(m) \) has no constant term, and when we divide by \( m \), there are no terms involving \( \frac{1}{m} \) in \( Q_{\mathbb{R}}(m) \).

Moreover, the terms in \( P_{\mathbb{R}}(m) \) of degree higher than zero decrease in degree by one when forming \( Q_{\mathbb{R}}(m) \). As a result, \( Q_{\mathbb{R}}(m) \) is a polynomial with integer coefficients.

Additionally, the constant term of \( Q_{\mathbb{R}}(m) \) arises from the terms in \( P_{\mathbb{R}}(m) \) that are linear in \( m \). Since these terms are divided by \( m \), they become constants in \( Q_{\mathbb{R}}(m) \).

Thus, \( Q_{\mathbb{R}}(m) = 0 \) is indeed a polynomial equation with integer coefficients \( a_i \in \mathbb{Z} \) and non-zero leading and constant terms \( a_n, a_0 \in \mathbb{Z}^* \).
\end{proof} 

\subsubsection{Application of the Rational Root Theorem to \( Q_{\mathbb{R}}(m) = 0 \)}

We proceed to apply the rational root theorem to the polynomial equation \( Q_{\mathbb{R}}(m) = 0 \), which has integer coefficients and non-zero leading and constant terms.

\begin{lemma}\label{lemma:rational_root_Qr}
For odd integers \( k \geq 3 \), all rational roots \( m_0 = \frac{p}{q} \) of the equation \( Q_{\mathbb{R}}(m) = 0 \) (expressed in lowest terms) satisfy:
\begin{itemize}
    \item \( p \) divides the constant term \( a_0 \),
    \item \( q \) divides the leading coefficient \( a_n \).
\end{itemize}
Specifically, possible values for \( m_0 \) are the divisors of \( (k + 1)(k - 2) \) and half of those divisors.
\end{lemma}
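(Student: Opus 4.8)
The plan is to read off the two relevant coefficients of $Q_{\mathbb{R}}(m)$ directly from the explicit polynomial form of $P_{\mathbb{R}}(m)$ supplied by Lemma~\ref{lemma:polynomial_equation}, and then feed them into the rational root theorem exactly as in Lemma~\ref{lemma:rational_root_theorem_application}. By Lemma~\ref{lemma:rational_root_theorem_application}, for odd $k$ the constant term of $P_{\mathbb{R}}(m)$ vanishes, so by Lemma~\ref{lemma:Qr_polynomial} the quotient $Q_{\mathbb{R}}(m) = P_{\mathbb{R}}(m)/m$ is genuinely a polynomial of degree $k$ with integer coefficients. Writing $Q_{\mathbb{R}}(m) = a_n m^n + \cdots + a_1 m + a_0$ with $n = k$, I need only determine $a_n$ and $a_0$.

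For the leading coefficient: among the summands $2(m - 1)^{k+1}$, $(k+1)(m-1)^{k}$, $-2(k+1)m^{k}$, $(k-1)$ of $P_{\mathbb{R}}(m)$, only the first has degree $k+1$, contributing $2m^{k+1}$. Dividing by $m$ gives the leading term $2m^{k}$ of $Q_{\mathbb{R}}(m)$, so $a_n = 2$ and the admissible denominators are $q \mid 2$, i.e.\ $q \in \{1,2\}$.

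For the constant term $a_0$ of $Q_{\mathbb{R}}(m)$: since the constant term of $P_{\mathbb{R}}(m)$ is zero, $a_0$ equals the coefficient of $m^{1}$ in $P_{\mathbb{R}}(m)$. The terms $-2(k+1)m^{k}$ (of degree $k \geq 3$) and $(k-1)$ (of degree $0$) contribute nothing. By the binomial theorem the coefficient of $m$ in $2(m-1)^{k+1}$ is $2\binom{k+1}{1}(-1)^{k} = 2(k+1)(-1)^{k}$, and the coefficient of $m$ in $(k+1)(m-1)^{k}$ is $(k+1)\binom{k}{1}(-1)^{k-1} = k(k+1)(-1)^{k-1}$. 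Since $k$ is odd, $(-1)^{k} = -1$ and $(-1)^{k-1} = 1$, so $a_0 = -2(k+1) + k(k+1) = (k+1)(k-2)$, which is a nonzero integer for every odd $k \geq 3$. Applying the rational root theorem, any rational root $m_0 = p/q$ written in lowest terms satisfies $p \mid (k+1)(k-2)$ and $q \mid 2$; since we seek $m \geq 3 > 0$, the candidates are precisely the positive divisors of $(k+1)(k-2)$ together with one half of each such divisor, as claimed.

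The only real work here is the sign-and-binomial bookkeeping in extracting the linear coefficient, so the main obstacle is purely computational: one must track carefully that the contributions of $(m-1)^{k+1}$ and $(m-1)^{k}$ combine with the signs dictated by the parity of $k$, and that dividing by $m$ shifts the degree-one coefficient of $P_{\mathbb{R}}$ into the constant slot of $Q_{\mathbb{R}}$. A minor conceptual point worth flagging is that, as elsewhere in the paper, $Q_{\mathbb{R}}(m)$ is the \emph{approximate} polynomial obtained after dropping the Euler--MacLaurin remainder, so the lemma constrains the rational roots of this approximant, which is the object under study.
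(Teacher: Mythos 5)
Your proposal is correct and follows essentially the same route as the paper: extract the leading coefficient $a_n = 2$ from the unique degree-$(k+1)$ term, compute the coefficient of $m^1$ in $P_{\mathbb{R}}(m)$ via the binomial theorem to obtain $(k+1)(k-2)$ as the constant term of $Q_{\mathbb{R}}(m)$ for odd $k$, and then invoke the rational root theorem to get divisors of $(k+1)(k-2)$ and their halves. The sign bookkeeping and the observation that the linear coefficient of $P_{\mathbb{R}}$ becomes the constant term of $Q_{\mathbb{R}}$ match the paper's argument exactly.
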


\begin{proof}
First, we expand the terms \( (m - 1)^{k + 1} \) and \( (m - 1)^{k} \) using the binomial theorem:
\begin{align*}
(m - 1)^{k + 1} &= \sum_{l = 0}^{k + 1} \binom{k + 1}{l} (-1)^{k + 1 - l} m^{l}, \\
(m - 1)^{k} &= \sum_{l = 0}^{k} \binom{k}{l} (-1)^{k - l} m^{l}.
\end{align*}

Next, we identify the coefficient of \( m^{1} \) in \( P_{\mathbb{R}}(m) \). From the expansions, the \( m^{1} \) terms are:
\begin{align*}
2(m - 1)^{k + 1} &\rightarrow 2 \binom{k + 1}{1} (-1)^{k} m^{1} = 2(k + 1)(-1)^{k} m^{1}, \\
(k + 1)(m - 1)^{k} &\rightarrow (k + 1) \binom{k}{1} (-1)^{k - 1} m^{1} = (k + 1)k(-1)^{k - 1} m^{1}.
\end{align*}

Combining these, the coefficient \( a_1 \) of \( m^{1} \) in \( P_{\mathbb{R}}(m) \) is:
\[
a_1 = 2(k + 1)(-1)^{k} + (k + 1)k(-1)^{k - 1}.
\]

Simplifying \( a_1 \) for odd \( k \geq 3 \):
\[
(-1)^{k} = -1, \quad (-1)^{k - 1} = 1.
\]
Thus,
\begin{align*}
a_1 &= 2(k + 1)(-1) + (k + 1)k(1) \\
&= -2(k + 1) + (k + 1)k \\
&= (k + 1)(k - 2).
\end{align*}

For the polynomial \( Q_{\mathbb{R}}(m) \), the constant term \( a_0 \) comes from the linear term in \( P_{\mathbb{R}}(m) \) divided by \( m \), so \( a_0 = a_1 \).

The leading coefficient \( a_n \) of \( Q_{\mathbb{R}}(m) \) remains \( a_n = 2 \), as the highest degree term in \( P_{\mathbb{R}}(m) \) is \( 2(m - 1)^{k + 1} \), which upon division by \( m \) reduces the degree by one.

Applying the rational root theorem, possible rational roots \( m_0 = \frac{p}{q} \) satisfy:
\begin{itemize}
    \item \( p \) divides \( a_0 = (k + 1)(k - 2) \),
    \item \( q \) divides \( a_n = 2 \).
\end{itemize}

Since \( m \geq 3 > 0 \), we consider positive rational roots. Therefore, possible values for \( m_0 \) are the divisors of \( (k + 1)(k - 2) \) and half of those divisors, i.e.,
\[
m_0 \in \left\{ \text{divisors of } (k + 1)(k - 2),\ \tfrac{1}{2} \times \text{divisors of } (k + 1)(k - 2) \right\}.
\]

This concludes the application of the rational root theorem to \( Q_{\mathbb{R}}(m) = 0 \).
\end{proof}

\subsubsection{Summary of Rational Roots of \( P_{\mathbb{R}}(m) \)}
\label{sec:sum_rat_roots}

We summarize the possible rational roots of the polynomial \( P_{\mathbb{R}}(m) = 0 \) for integers \( k \geq 2 \) and \( m \geq 3 \).

\begin{lemma}\label{lemma:rational_roots_summary}
For \( k \geq 2 \), the possible rational roots \( m_0 \) of \( P_{\mathbb{R}}(m) = 0 \) are as follows:

\begin{itemize}
    \item \textbf{Even \( k \geq 2 \):}
    \[
    m_0 = \text{divisors of } 2(k - 1), \quad \text{and} \quad m_0 = \tfrac{1}{2} \times \text{divisors of } 2(k - 1).
    \]
    
    \item \textbf{Odd \( k \geq 3 \):}
    \[
    m_0 = \text{divisors of } (k + 1)(k - 2), \quad \text{and} \quad m_0 = \tfrac{1}{2} \times \text{divisors of } (k + 1)(k - 2).
    \]
\end{itemize}
\end{lemma}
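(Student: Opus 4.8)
The plan is to assemble this statement directly from the case analysis already carried out, splitting on the parity of $k$ and invoking the rational root theorem in the forms established in Lemmas~\ref{lemma:polynomial_equation}, \ref{lemma:rational_root_theorem_application}, \ref{lemma:Qr_polynomial}, and~\ref{lemma:rational_root_Qr}. No new computation is needed beyond reading off leading coefficients and constant terms, but two points require care: that the relevant constant term is genuinely nonzero, so that the rational root theorem can be applied at all, and that the spurious root $m = 0$ arising in the odd case is correctly discarded under the hypothesis $m \ge 3$.

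\textbf{Even $k \ge 2$.} First I would recall from Lemma~\ref{lemma:polynomial_equation} that $P_{\mathbb{R}}(m) = 0$ is equivalent to the degree-$(k+1)$ integer polynomial equation
\[
2(m-1)^{k+1} + (k+1)(m-1)^{k} - 2(k+1)m^{k} + (k-1) = 0,
\]
whose leading coefficient is $a_n = 2$ and whose constant term, by the computation in the proof of Lemma~\ref{lemma:rational_root_theorem_application}, is $a_0 = 2(k-1)$. Since $k \ge 2$ forces $a_0 \neq 0$, the rational root theorem applies, and every rational root $m_0 = p/q$ in lowest terms satisfies $p \mid 2(k-1)$ and $q \mid 2$. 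Because $m \ge 3 > 0$ we may restrict to positive $p$ and $q \in \{1,2\}$, which yields precisely $m_0 \in \{\text{divisors of }2(k-1)\} \cup \{\tfrac{1}{2} \times \text{divisors of }2(k-1)\}$.

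\textbf{Odd $k \ge 3$.} Here the same proof gave $a_0 = 0$, so $m$ divides $P_{\mathbb{R}}(m)$ and we write $P_{\mathbb{R}}(m) = m\,Q_{\mathbb{R}}(m)$ with $Q_{\mathbb{R}}(m) = P_{\mathbb{R}}(m)/m$. By Lemma~\ref{lemma:Qr_polynomial}, $Q_{\mathbb{R}}$ has integer coefficients with nonzero leading and constant terms, and by Lemma~\ref{lemma:rational_root_Qr} its leading coefficient is $2$ while its constant term equals $(k+1)(k-2)$, which is nonzero for odd $k \ge 3$ (indeed at least $4$). The rational roots of $P_{\mathbb{R}}$ are therefore $\{0\}$ together with the rational roots of $Q_{\mathbb{R}}$; the root $m = 0$ violates $m \ge 3$ and is dropped, while the rational root theorem applied to $Q_{\mathbb{R}}$, restricted as before to positive candidates, gives $m_0 \in \{\text{divisors of }(k+1)(k-2)\} \cup \{\tfrac{1}{2} \times \text{divisors of }(k+1)(k-2)\}$, as claimed.

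\textbf{Main obstacle.} There is no deep obstacle: the substance of the lemma is the consolidation itself. The only delicate points are the ones flagged above --- checking $2(k-1) \neq 0$ and $(k+1)(k-2) \neq 0$ on the stated ranges so that the rational root theorem is legitimately invoked, and remembering that the resulting lists are sets of \emph{necessary} conditions, so it is permissible (and in fact intended) that they over-count the actual roots. These candidate sets are exactly what the subsequent sections will test against the polynomial equation.
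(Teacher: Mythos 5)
Your proposal is correct and follows essentially the same route as the paper: the paper's proof of this lemma is simply a consolidation citing Lemmas~\ref{lemma:rational_root_theorem_application}, \ref{lemma:Qr_polynomial}, and~\ref{lemma:rational_root_Qr}, exactly as you do, with the same leading coefficient $a_n = 2$ and constant terms $2(k-1)$ (even $k$) and $(k+1)(k-2)$ (odd $k$, via $Q_{\mathbb{R}}$). Your added checks that these constant terms are nonzero and that the spurious root $m=0$ is discarded are sensible refinements of the same argument.
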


\begin{proof}
This follows directly from the application of the rational root theorem to \( P_{\mathbb{R}}(m) = 0 \), as established in the previous subsections. The divisibility conditions on \( p \) and \( q \) determine the possible rational roots based on the constant term and leading coefficient of the polynomial.
\end{proof}

To satisfy the constraint \( m \geq 3 \), we further restrict \( k \) and identify the integer rational roots as:

\begin{itemize}
    \item \textbf{For even \( k \geq 4 \):}
    \[
    m_0 \in \{ k - 1,\ 2(k - 1) \}.
    \]
    
    \item \textbf{For odd \( k \geq 5 \):}
    \[
    m_0 \in \{ k - 2 \}.
    \]
    
    \item \textbf{For odd \( k \geq 3 \):}
    \[
    m_0 \in \{ k + 1,\ (k + 1)(k - 2) \}.
    \]
\end{itemize}

By constraining the rational roots to integers, we can refocus our analysis from the approximate equation \( P_{\mathbb{R}}(m) = 0 \) back to the original integer equation \( P(m) = 0 \).

\subsection{Analysis of \( P_{\mathbb{R}}(m) \) and \( Q_{\mathbb{R}}(m) \) at \( m = m_0 \)}

We analyze the sign of \( P_{\mathbb{R}}(m) \) at the candidate integer rational roots \( m_0 \) identified earlier. Recall that
\[
P_{\mathbb{R}}(m) \approx 2(m - 1)^{k + 1} + (k + 1)(m - 1)^{k} - 2(k + 1)m^{k} + (k - 1).
\]

\subsubsection{Case 1: Even \( k \geq 4 \)}

\begin{lemma}\label{lemma:Pmr_sign_even_k}
For even integers \( k \geq 4 \), we have \( P_{\mathbb{R}}(m_0) < 0 \) at \( m_0 = k - 1 \).
\end{lemma}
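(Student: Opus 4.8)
The plan is to substitute the candidate root $m_0 = k-1$ directly into the integer polynomial form from Lemma~\ref{lemma:polynomial_equation}, namely $2(k+1)P_{\mathbb{R}}(m) = 2(m-1)^{k+1} + (k+1)(m-1)^{k} - 2(k+1)m^{k} + (k-1)$, and to exploit the algebraic coincidence $m_0 - 1 = k-2$. With this substitution the first two terms share the common factor $(k-2)^{k}$ and collapse to $(k-2)^{k}\bigl(2(k-2)+(k+1)\bigr) = 3(k-1)(k-2)^{k}$; absorbing the trailing constant $(k-1)$ and pulling the factor $(k-1)$ out of the $m^{k}$ term as well (so that $(k-1)^{k}$ becomes $(k-1)\cdot(k-1)^{k-1}$) yields the clean identity
\[
2(k+1)\,P_{\mathbb{R}}(k-1) \;=\; (k-1)\Bigl[\,3(k-2)^{k} + 1 - 2(k+1)(k-1)^{k-1}\,\Bigr].
\]
Since $k-1>0$ and $2(k+1)>0$, the sign of $P_{\mathbb{R}}(k-1)$ equals the sign of the bracket, so the lemma is reduced to the single inequality $2(k+1)(k-1)^{k-1} > 3(k-2)^{k} + 1$.

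To establish that inequality I would compare $(k-1)^{k-1}$ with $(k-2)^{k-1}$ through $(k-1)^{k-1} = (k-2)^{k-1}\bigl(1+\tfrac{1}{k-2}\bigr)^{k-1}$ and apply Bernoulli's inequality $\bigl(1+\tfrac{1}{k-2}\bigr)^{k-1} \ge 1 + \tfrac{k-1}{k-2} = \tfrac{2k-3}{k-2}$, giving $(k-1)^{k-1} \ge (2k-3)(k-2)^{k-2}$. Writing $(k-2)^{k} = (k-2)^{2}(k-2)^{k-2}$ and factoring out $(k-2)^{k-2}$, the difference $2(k+1)(k-1)^{k-1} - 3(k-2)^{k}$ is bounded below by $(k-2)^{k-2}\bigl(2(k+1)(2k-3) - 3(k-2)^{2}\bigr) = (k-2)^{k-2}(k^{2}+10k-18)$, which for every $k \ge 3$ is at least $21$ and hence exceeds $1$. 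Therefore $2(k+1)(k-1)^{k-1} > 3(k-2)^{k} + 1$, the bracket is negative, and $P_{\mathbb{R}}(k-1) < 0$, as claimed.

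I expect the only real risk to be bookkeeping during the factorization — keeping the signs straight and, in particular, not conflating $(k-1)^{k}$ with $(k-1)^{k-1}$ after the extraction — rather than anything in the estimate, which is comfortably slack: the crude two-term Bernoulli bound already beats the right-hand side by more than an order of magnitude, so no numerical check for small $k$ is needed beyond recording that $k \ge 4$ guarantees $m_0 = k-1 \ge 3$ and $k-2 \ge 1$ (so that $(k-2)^{k-2} \ge 1$). An equally short alternative is to use $\bigl(1+\tfrac1n\bigr)^{n+1} > e$ to get $(k-1)^{k-1} > e\,(k-2)^{k-1}$ and then compare the linear factors $2e(k+1)$ against $3(k-2)$; I would pick whichever keeps the final write-up shortest. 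Note that the parity of $k$ is never used in the estimate itself — it enters only through the provenance of the candidate $m_0 = k-1$ in Lemma~\ref{lemma:rational_roots_summary} and the surrounding remarks — so the argument actually covers all integers $k \ge 4$.
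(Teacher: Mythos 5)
Your proposal is correct, and it takes a genuinely different route from the paper. The paper also substitutes $m_0-1=k-2$ and combines the first two terms into $3(k-1)(k-2)^{k}$, but from there it studies the ratio $R=\frac{|C|}{A+B}=\frac{2(k+1)}{3(k-1)}\bigl(\frac{k-1}{k-2}\bigr)^{k}$, computes its limit $\tfrac{2e}{3}$ as $k\to\infty$, proves monotonicity of $\ln R$ by differentiating in $k$, quotes numerical values of $P_{\mathbb{R}}$ for $k=4,6,8,10$, and then dismisses the trailing constant $D=k-1$ as ``positive but negligible'' without a quantitative bound. You instead pull the common factor $(k-1)$ out of the \emph{entire} expression, so the constant term is absorbed exactly as the $+1$ in the bracket, reducing the lemma to the single inequality $2(k+1)(k-1)^{k-1}>3(k-2)^{k}+1$, which you settle with Bernoulli's inequality and the elementary bound $(k-2)^{k-2}\bigl(k^{2}+10k-18\bigr)\ge 21$ (your algebra here checks out: $2(k+1)(2k-3)-3(k-2)^{2}=k^{2}+10k-18$). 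What each approach buys: the paper's ratio-plus-limit-plus-monotonicity machinery is a template it reuses verbatim for the other candidate roots, including those where the sign genuinely flips with $k$; your argument is shorter, entirely algebraic (no limits, derivatives, or numerics), handles the constant term rigorously rather than heuristically, and, as you note, never uses the parity of $k$, so it covers all $k\ge 4$ at this candidate. The only caveat is notational: like the paper, you are really bounding the cleared-denominator form $2(k+1)P_{\mathbb{R}}(m)$, and you correctly record that the positive factor $2(k+1)$ preserves the sign.
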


\begin{proof}
At \( m = k - 1 \), we have \( m - 1 = k - 2 \). Substituting into \( P_{\mathbb{R}}(m) \), we obtain
\[
P_{\mathbb{R}}(k - 1) \approx \underbrace{2(k - 2)^{k + 1}}_{A} + \underbrace{(k + 1)(k - 2)^{k}}_{B} \underbrace{-2(k + 1)(k - 1)^{k}}_{C} + \underbrace{(k - 1)}_{D}.
\]

Evaluating \( P_{\mathbb{R}}(k - 1) \) for specific values of \( k \):
\begin{align*}
k &= 4, & P_{\mathbb{R}}(3) &= -663, \\
k &= 6, & P_{\mathbb{R}}(5) &\approx -1.57 \times 10^{5}, \\
k &= 8, & P_{\mathbb{R}}(7) &\approx -6.85 \times 10^{7}, \\
k &= 10, & P_{\mathbb{R}}(9) &\approx -4.77 \times 10^{10}.
\end{align*}

Next, we consider the ratio \( R = \frac{|C|}{A + B} \):
\begin{align*}
R &= \frac{2(k + 1)(k - 1)^{k}}{2(k - 2)^{k + 1} + (k + 1)(k - 2)^{k}} \\
  &= \frac{2(k + 1)}{3(k - 1)} \left( \frac{k - 1}{k - 2} \right)^{k}.
\end{align*}

We analyze the limit \( \mathcal{L} \) of \( R \) as \( k \to \infty \). Observe that
\[
\frac{2(k + 1)}{3(k - 1)} = \frac{2 + \frac{2}{k}}{3 - \frac{3}{k}} \to \frac{2}{3} \quad \text{as} \quad k \to \infty.
\]
Also,
\begin{align*}
\ln \left( \left( \frac{k - 1}{k - 2} \right)^{k} \right) &= k \ln \left( 1 + \frac{1}{k - 2} \right) \\
&\approx \frac{k}{k - 2} - \frac{k}{2(k - 2)^{2}} \to 1 \quad \text{as} \quad k \to \infty.
\end{align*}
Thus,
\[
\left( \frac{k - 1}{k - 2} \right)^{k} \to e \quad \text{as} \quad k \to \infty.
\]
Therefore, the limit is
\[
\mathcal{L} = \lim_{k \to \infty} R = \frac{2e}{3}.
\]

To analyze the monotonicity of \( R \), we define \( L(k) = \ln R \):
\[
L(k) = \ln R = \underbrace{\ln \left( \frac{2(k + 1)}{3(k - 1)} \right)}_{A(k)} + \underbrace{k \ln \left( \frac{k - 1}{k - 2} \right)}_{B(k)}.
\]
Compute the derivatives \( A'(k) \) and \( B'(k) \):
\begin{align*}
A'(k) &= \frac{d}{dk} \ln \left( \frac{2(k + 1)}{3(k - 1)} \right) = \frac{-2}{(k + 1)(k - 1)}, \\
B'(k) &= \ln \left( \frac{k - 1}{k - 2} \right) + k \left( \frac{1}{k - 1} - \frac{1}{k - 2} \right) \\
&= \ln \left( \frac{k - 1}{k - 2} \right) - \frac{k}{(k - 1)(k - 2)}.
\end{align*}
Therefore, the derivative of \( L(k) \) is
\[
L'(k) = A'(k) + B'(k) = \ln \left( \frac{k - 1}{k - 2} \right) - \left[ \frac{2}{(k + 1)(k - 1)} + \frac{k}{(k - 1)(k - 2)} \right].
\]
Let
\[
C(k) = \frac{2}{(k + 1)(k - 1)} + \frac{k}{(k - 1)(k - 2)}.
\]
To show that \( L'(k) < 0 \) for all \( k \geq 4 \), we note that
\[
\ln \left( \frac{k - 1}{k - 2} \right) = \ln \left( 1 + \frac{1}{k - 2} \right) < \frac{1}{k - 2},
\]
since \( \ln(1 + x) < x \) for \( x > 0 \). Also,
\[
C(k) > \frac{1}{k - 2}, \quad \forall k \geq 4.
\]
Therefore,
\[
\ln \left( \frac{k - 1}{k - 2} \right) < \frac{1}{k - 2} < C(k),
\]
implying that
\[
L'(k) = \ln \left( \frac{k - 1}{k - 2} \right) - C(k) < 0.
\]
Since \( L'(k) < 0 \) for all \( k \geq 4 \), \( L(k) \) is decreasing on \( [4, \infty) \). Consequently, \( R = e^{L(k)} \) is also decreasing.

Given that \( R > 1 \) and \( |C| > A + B \), and considering \( D \) is positive but negligible compared to \( |C| \), we conclude that
\[
P_{\mathbb{R}}(k - 1) < 0 \quad \text{for all even } k \geq 4.
\]
\end{proof}

This confirms that for even \( k \geq 4 \), \( P_{\mathbb{R}}(m) \) is negative at \( m = m_0 = k - 1 \), indicating that these values do not yield a solution to the equation \( P_{\mathbb{R}}(m) = 0 \).

\subsubsection{Continuation of the Analysis for Even \( k \geq 4 \)}

We now consider the second candidate \( m_0 = 2(k - 1) \) for even \( k \geq 4 \).

\begin{lemma}\label{lemma:Pmr_sign_even_k_second_candidate}
For even integers \( k \geq 4 \), the sign of \( P_{\mathbb{R}}(m) \) at \( m = m_0 = 2(k - 1) \) changes depending on the value of \( k \).
\end{lemma}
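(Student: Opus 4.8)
Write $m_0 = 2(k-1)$, so $m_0 - 1 = 2k-3$, and decompose the polynomial exactly as in Lemma~\ref{lemma:Pmr_sign_even_k}:
\[
P_{\mathbb{R}}(2(k-1)) \approx \underbrace{2(2k-3)^{k+1}}_{A} + \underbrace{(k+1)(2k-3)^{k}}_{B} \underbrace{-\,2(k+1)(2k-2)^{k}}_{C} + \underbrace{(k-1)}_{D}.
\]
Since $A,B,D>0$ and $D$ is negligible against $|C|$, the sign of $P_{\mathbb{R}}(2(k-1))$ is governed by the ratio
\[
R(k) \;=\; \frac{A+B}{|C|} \;=\; \frac{(2k-3)^{k}\bigl(2(2k-3)+(k+1)\bigr)}{2(k+1)(2k-2)^{k}} \;=\; \frac{5(k-1)}{2(k+1)}\left(\frac{2k-3}{2k-2}\right)^{k},
\]
with $R(k)\to \tfrac{5}{2}e^{-1/2}\approx 1.516$ as $k\to\infty$. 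The first step is thus to reduce the lemma to showing $R(k)<1$ for the small even exponents and $R(k)>1$ for the large ones.

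The plan for the small exponents is direct substitution: one checks $P_{\mathbb{R}}(6) = -3582 < 0$ for $k=4$ and $P_{\mathbb{R}}(10) = -713970 < 0$ for $k=6$, so $R(4),R(6)<1$ and $P_{\mathbb{R}}$ is negative at $m_0 = 2(k-1)$ for these values. For every even $k\ge 8$ the plan is a monotonicity argument: set $L(k)=\ln R(k) = \ln\frac{5(k-1)}{2(k+1)} + k\ln\frac{2k-3}{2k-2}$, so that
\[
L'(k) \;=\; \frac{2}{(k-1)(k+1)} + \ln\!\left(\frac{2k-3}{2k-2}\right) + \frac{2k}{(2k-3)(2k-2)}.
\]
Using the elementary bound $\ln(1-x)\ge -x/(1-x)$ for $0\le x<1$ with $x=\frac{1}{2k-2}$ gives $\ln\frac{2k-3}{2k-2}\ge -\frac{1}{2k-3}$, and since $\frac{2k}{(2k-3)(2k-2)}-\frac{1}{2k-3}=\frac{2}{(2k-3)(2k-2)}$, one obtains $L'(k)\ge \frac{2}{(k-1)(k+1)}+\frac{2}{(2k-3)(2k-2)}>0$ for all $k\ge 2$. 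Hence $R$ is strictly increasing on $[2,\infty)$; combining this with the single explicit check $R(8)=\frac{35}{18}\left(\frac{13}{14}\right)^{8}\approx 1.07>1$ forces $R(k)>1$, and therefore $P_{\mathbb{R}}(2(k-1)) = |C|\,(R(k)-1)+D>0$, for every even $k\ge 8$. Together with the $k=4,6$ computations, this shows the sign of $P_{\mathbb{R}}$ at $m_0=2(k-1)$ is negative for $k\in\{4,6\}$ and positive for even $k\ge 8$, i.e. it genuinely depends on $k$, which is the assertion.

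The main obstacle is the tightness at the crossover: $R(8)$ exceeds $1$ by only about seven percent, so the estimate on $L'(k)$ must be clean enough to remain valid all the way down to $k=8$, and the derivative should be shown positive on the whole interval where $2k-3>0$ so that no intermediate even value slips through. In particular the one-sided bound $\ln(1+x)<x$ used in Lemma~\ref{lemma:Pmr_sign_even_k} points the wrong way here, which is why the sharper estimate $\ln(1-x)\ge -x/(1-x)$ is required. A minor secondary point is verifying that the positive remainder $D=k-1$ cannot reverse the sign when $R(k)$ is only slightly below $1$; this is immediate since for $k=4,6$ the negative gap $|C|(1-R(k))$ is already of order $10^3$–$10^5$ while $D\le 5$.
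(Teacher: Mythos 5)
Your proposal is correct and follows essentially the same route as the paper: the same decomposition at \( m-1 = 2k-3 \), the same combination \( A+B = 5(k-1)(2k-3)^k \), the same ratio against \( |C| \) (you use the reciprocal of the paper's \( R \)), the same limit \( \tfrac{5}{2}e^{-1/2} \), and the same strategy of monotonicity of \( \ln R \) plus a check at the crossover \( k=8 \), with explicit negativity for \( k=4,6 \). Your version is in fact slightly tighter than the paper's, since you verify \( k=4,6 \) by exact evaluation of the polynomial and prove monotonicity with the rigorous bound \( \ln(1-x) \ge -x/(1-x) \), whereas the paper relies on approximate numerical ratio values and a truncated Taylor estimate of \( L'(k) \).
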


\begin{proof}
At \( m_0 = 2(k - 1) \), we have \( m - 1 = 2k - 3 \). Substituting into \( P_{\mathbb{R}}(m) \), we obtain
\[
P_{\mathbb{R}}(2(k - 1)) \approx \underbrace{2(2k - 3)^{k + 1}}_{E} + \underbrace{(k + 1)(2k - 3)^{k}}_{F} \underbrace{-2(k + 1)[2(k - 1)]^{k}}_{G} + \underbrace{(k - 1)}_{H}.
\]

First, we combine \( E \) and \( F \):
\begin{align*}
E + F &= (2k - 3)^{k} \left[ 2(2k - 3) + (k + 1) \right] \\
      &= (2k - 3)^{k} \left[ 4k - 6 + k + 1 \right] \\
      &= (2k - 3)^{k} \left[ 5k - 5 \right] \\
      &= 5(k - 1)(2k - 3)^{k}.
\end{align*}

Next, we define the ratio \( R = \dfrac{|G|}{E + F} \):
\begin{align*}
R &= \frac{2(k + 1)[2(k - 1)]^{k}}{5(k - 1)(2k - 3)^{k}} \\
  &= \frac{2(k + 1)}{5(k - 1)} \left( \frac{2(k - 1)}{2k - 3} \right)^{k}.
\end{align*}

We compute numerical values of \( R \) for specific \( k \):
\begin{align*}
k &= 4, & R &\approx 1.382, \\
k &= 6, & R &\approx 1.054, \\
k &= 8, & R &\approx 0.930, \\
k &= 10, & R &\approx 0.866.
\end{align*}

We observe that for \( k = 4, 6 \), \( R > 1 \), and for \( k \geq 8 \), \( R < 1 \).

Now, we study the limit \( \mathcal{L} \) of \( R \) as \( k \rightarrow \infty \). First,
\[
\frac{2(k + 1)}{5(k - 1)} = \frac{2 + \frac{2}{k}}{5 - \frac{5}{k}} \rightarrow \frac{2}{5} \quad \text{as} \quad k \rightarrow \infty.
\]
Next,
\begin{align*}
\ln \left( \left( \frac{2(k - 1)}{2k - 3} \right)^{k} \right) &= k \ln \left( 1 + \frac{1}{2k - 3} \right) \\
&\approx \frac{k}{2k - 3} - \frac{k}{2(2k - 3)^{2}} \rightarrow \frac{1}{2} \quad \text{as} \quad k \rightarrow \infty.
\end{align*}
Therefore,
\[
\left( \frac{2(k - 1)}{2k - 3} \right)^{k} \rightarrow e^{1/2} = \sqrt{e}.
\]
Thus, the limit is
\[
\mathcal{L} = \lim_{k \rightarrow \infty} R = \frac{2\sqrt{e}}{5}.
\]

To confirm the monotonicity of \( R \) for \( k \geq 8 \), we consider \( L(k) = \ln R \):
\[
L(k) = \ln R = \underbrace{\ln \left( \frac{2(k + 1)}{5(k - 1)} \right)}_{A(k)} + \underbrace{k \ln \left( \frac{2(k - 1)}{2k - 3} \right)}_{B(k)}.
\]

Compute the derivatives \( A'(k) \) and \( B'(k) \):
\begin{align*}
A'(k) &= \frac{1}{k + 1} - \frac{1}{k - 1}, \\
B'(k) &= \ln \left( \frac{2(k - 1)}{2k - 3} \right) + k \left( \frac{1}{k - 1} - \frac{2}{2k - 3} \right).
\end{align*}

Combining, the derivative \( L'(k) \) is
\begin{align*}
L'(k) &= A'(k) + B'(k) \\
&= \left( \frac{1}{k + 1} - \frac{1}{k - 1} \right) + \ln \left( \frac{2(k - 1)}{2k - 3} \right) + k \left( \frac{1}{k - 1} - \frac{2}{2k - 3} \right) \\
&= \frac{1}{k + 1} + 1 + \ln \left( \frac{2(k - 1)}{2k - 3} \right) - \frac{2k}{2k - 3} \\
&= \frac{1}{k + 1} + \ln \left( \frac{2(k - 1)}{2k - 3} \right) - \frac{3}{2k - 3}.
\end{align*}

Simplifying:
\begin{align*}
L'(k) &= \frac{1}{k + 1} + \ln \left( 1 + \frac{1}{2k - 3} \right) - \frac{3}{2k - 3}.
\end{align*}

For \( k \geq 8 \), we approximate the logarithm using \( \ln(1 + x) \approx x - \frac{x^{2}}{2} \), with \( x = \frac{1}{2k - 3} \):
\begin{align*}
\ln \left( 1 + \frac{1}{2k - 3} \right) &\approx \frac{1}{2k - 3} - \frac{1}{2(2k - 3)^{2}}.
\end{align*}

We approximate \( L'(k) \):
\begin{align*}
L'(k) &\approx \frac{1}{k + 1} + \left( \frac{1}{2k - 3} - \frac{1}{2(2k - 3)^{2}} \right) - \frac{3}{2k - 3} \\
&\approx \frac{1}{k + 1} - \frac{2}{2k - 3} - \frac{1}{2(2k - 3)^{2}} \\
&\approx  - \frac{5}{(k + 1)(2k - 3)} - \frac{1}{2(2k - 3)^{2}}.
\end{align*}

Thus, we find that \( L'(k) < 0 \) for \( k \geq 8 \). Therefore, \( L(k) \) is decreasing on \( [8, \infty) \), implying that \( R = e^{L(k)} \) is also decreasing.

Given that \( R \) decreases from approximately \( 0.930 \) at \( k = 8 \) to \( \frac{2\sqrt{e}}{5} \approx 0.659 \) as \( k \rightarrow \infty \), and \( R > 0 \), we can conclude:

- For \( k = 4, 6 \), \( R > 1 \), so \( |G| > E + F \), and since \( E + F > 0 \), \( G < 0 \), \( H > 0 \) but \( H \ll |G| \), we have \( P_{\mathbb{R}}(m_0) < 0 \).

- For \( k \geq 8 \), \( R < 1 \), so \( |G| < E + F \), and with \( E + F > 0 \), \( G < 0 \), \( H > 0 \), we have \( P_{\mathbb{R}}(m_0) > 0 \).

Therefore, the sign of \( P_{\mathbb{R}}(m_0) \) at \( m_0 = 2(k - 1) \) changes depending on \( k \), being negative for \( k = 4, 6 \) and positive for \( k \geq 8 \).
\end{proof}

\subsubsection{Implications for \( Q_{\mathbb{R}}(m) \)}

Recall that \( Q_{\mathbb{R}}(m) = \dfrac{P_{\mathbb{R}}(m)}{m} \). By evaluating the sign of \( P_{\mathbb{R}}(m) \) at the candidate integer rational roots of \( Q_{\mathbb{R}}(m) \), we can deduce the sign of \( Q_{\mathbb{R}}(m) \) at \( m = m_0 \) for odd \( k \geq 3 \) and \( m \geq 3 \).

This analysis indicates that the candidate integer values of \( m \) do not satisfy \( P_{\mathbb{R}}(m) = 0 \) or \( Q_{\mathbb{R}}(m) = 0 \) for the respective ranges of \( k \), supporting the conjecture that no solutions exist for \( k \geq 2 \) and \( m \geq 3 \).

\subsubsection{Case 2: Odd \( k \geq 5 \)}

We now analyze the sign of \( P_{\mathbb{R}}(m) \) at \( m_0 = k - 2 \) for odd integers \( k \geq 5 \).

\begin{lemma}\label{lemma:Pmr_sign_odd_k}
For odd integers \( k \geq 5 \), we have \( P_{\mathbb{R}}(m_0) < 0 \) at \( m_0 = k - 2 \).
\end{lemma}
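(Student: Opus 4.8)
The plan is to mirror the argument used for the even case in Lemma~\ref{lemma:Pmr_sign_even_k}: evaluate the explicit polynomial form of \( P_{\mathbb{R}} \) at the candidate root \( m_0 = k-2 \), group the two positive top-degree terms, and show the single negative term overwhelms their sum by a fixed proportional margin — large enough that the positive constant \( k-1 \) cannot restore positivity.

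First I would substitute \( m_0 = k-2 \), so that \( m_0 - 1 = k-3 \) (which is \( \geq 2 \) exactly because \( k \geq 5 \), and indeed \( m_0 = k-2 \geq 3 \) there), obtaining
\[
P_{\mathbb{R}}(k-2) \approx \underbrace{2(k-3)^{k+1}}_{A} + \underbrace{(k+1)(k-3)^{k}}_{B} \underbrace{-\,2(k+1)(k-2)^{k}}_{C} + \underbrace{(k-1)}_{D}.
\]
Then I would combine the positive terms, \( A + B = (k-3)^{k}\bigl[2(k-3)+(k+1)\bigr] = (3k-5)(k-3)^{k} > 0 \), and introduce the ratio
\[
R \;=\; \frac{|C|}{A+B} \;=\; \frac{2(k+1)}{3k-5}\left(\frac{k-2}{k-3}\right)^{k} \;=\; \frac{2(k+1)}{3k-5}\left(1+\frac{1}{k-3}\right)^{k}.
\]

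The \emph{core step} is to show \( R > 1 \) uniformly over odd \( k \geq 5 \). For the rational prefactor, \( \tfrac{2(k+1)}{3k-5} > \tfrac{2}{3} \) holds for every \( k \) (cross-multiplying reduces to \( 6 > -10 \)). For the exponential factor, since \( (1+1/n)^{n} \) is increasing in \( n \) and \( n = k-3 \geq 2 \), we get \( \bigl(1+\tfrac{1}{k-3}\bigr)^{k-3} \geq (3/2)^{2} = 9/4 \), hence \( \bigl(1+\tfrac{1}{k-3}\bigr)^{k} \geq \bigl(1+\tfrac{1}{k-3}\bigr)^{k-3} \geq 9/4 \). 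Multiplying the two estimates gives \( R > \tfrac{2}{3}\cdot\tfrac{9}{4} = \tfrac{3}{2} > 1 \). (One could instead imitate Lemma~\ref{lemma:Pmr_sign_even_k} and verify \( \tfrac{d}{dk}\ln R < 0 \) together with \( \lim_{k\to\infty} R = 2e/3 > 1 \), but the monotone-sequence bound is shorter and already supplies a margin.) From \( R \geq 3/2 \) we get \( |C| - (A+B) \geq \tfrac12(A+B) = \tfrac12(3k-5)(k-3)^{k} \), and for \( k \geq 5 \) this lower bound dwarfs \( D = k-1 \) — already \( \tfrac12\cdot 10\cdot 2^{5} = 160 \gg 4 \) at \( k=5 \), the gap growing with \( k \) — so \( P_{\mathbb{R}}(k-2) \approx (A+B)(1-R) + D < 0 \). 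As a consistency check one can record \( P_{\mathbb{R}}(3) = -2592 \) at \( k=5 \) and \( P_{\mathbb{R}}(5) \approx -9.9\times 10^{5} \) at \( k=7 \).

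I expect the main obstacle to be the uniformity of \( R > 1 \) at the smallest admissible exponent: the prefactor \( \tfrac{2(k+1)}{3k-5} \) is largest at \( k=5 \) whereas the exponential factor there sits farthest below its limiting value \( e \), so one must be sure the product never dips toward \( 1 \). The monotonicity of \( (1+1/n)^{n} \) resolves this cleanly, and simultaneously clarifies why the hypothesis is \( k \geq 5 \) rather than \( k \geq 3 \) (for \( k = 3 \) the candidate \( m_0 = k-2 = 1 \) falls below the standing assumption \( m \geq 3 \), so it need not be treated here; the candidates \( m_0 = k+1 \) and \( m_0 = (k+1)(k-2) \) for odd \( k\geq 3 \) are handled separately). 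A standing caveat, common to every lemma in this section, is that the relation above is only approximate because the Euler--MacLaurin correction term dropped in the earlier analysis has been omitted; accordingly the conclusion concerns the sign of the surrogate polynomial \( P_{\mathbb{R}} \), not of the exact difference \( P \).
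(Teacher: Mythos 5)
Your proposal is correct, and it follows the paper's skeleton exactly up to the decisive inequality: same substitution \( m_0 = k-2 \), same grouping \( A+B = (3k-5)(k-3)^{k} \), same ratio \( R = \frac{2(k+1)}{3k-5}\left(\frac{k-2}{k-3}\right)^{k} \). Where you diverge is in how \( R>1 \) is established. The paper computes \( R \) numerically at \( k=5,7,9,11 \), shows \( R \to 2e/3 \) as \( k \to \infty \), and argues \( L(k)=\ln R \) is decreasing via a derivative computation that substitutes the truncated Taylor expansion \( \ln(1+x)\approx x - x^{2}/2 \) as if it were exact; the conclusion then rests on ``decreasing toward a limit exceeding \(1\).'' You instead prove the uniform elementary bound \( R \geq \tfrac{2}{3}\cdot\bigl(1+\tfrac{1}{k-3}\bigr)^{k-3} \geq \tfrac{2}{3}\cdot\tfrac{9}{4} = \tfrac{3}{2} \) using only \( \tfrac{2(k+1)}{3k-5} > \tfrac{2}{3} \) and the monotonicity of \( (1+1/n)^{n} \). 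This buys you two things the paper does not have: a fully rigorous argument free of Taylor truncation and spot-checked numerics, and an explicit quantitative margin \( |C|-(A+B) \geq \tfrac12(3k-5)(k-3)^{k} \) that lets you dispose of \( D=k-1 \) by a concrete comparison rather than the paper's qualitative ``negligible.'' What the paper's route buys in exchange is sharper asymptotic information (the limiting value \( 2e/3 \) and the monotone decrease of \( R \)), which your bound does not attempt to recover; for the stated sign conclusion your shorter argument suffices, and your consistency values \( P_{\mathbb{R}}(3) = -2592 \) at \( k=5 \) and \( \approx -9.9\times 10^{5} \) at \( k=7 \) check out. You also correctly flag, as the paper does, that the conclusion concerns the approximate polynomial \( P_{\mathbb{R}} \) rather than the exact difference \( P \).
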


\begin{proof}
At \( m = k - 2 \), we have \( m - 1 = k - 3 \). Substituting into \( P_{\mathbb{R}}(m) \), we obtain
\[
P_{\mathbb{R}}(k - 2) \approx \underbrace{2(k - 3)^{k + 1}}_{A} + \underbrace{(k + 1)(k - 3)^{k}}_{B} \underbrace{-2(k + 1)(k - 2)^{k}}_{C} + \underbrace{(k - 1)}_{D}.
\]

First, combine \( A \) and \( B \):
\begin{align*}
A + B &= 2(k - 3)^{k + 1} + (k + 1)(k - 3)^{k} \\
      &= (k - 3)^{k} \left[ 2(k - 3) + (k + 1) \right] \\
      &= (k - 3)^{k} \left( 2k - 6 + k + 1 \right) \\
      &= (3k - 5)(k - 3)^{k}.
\end{align*}

Next, define the ratio \( R = \dfrac{|C|}{A + B} \):
\begin{align*}
R &= \frac{2(k + 1)(k - 2)^{k}}{(3k - 5)(k - 3)^{k}} \\
  &= \frac{2(k + 1)}{3k - 5} \left( \frac{k - 2}{k - 3} \right)^{k}.
\end{align*}

Compute numerical values of \( R \) for specific \( k \):
\begin{align*}
k &= 5, & R &\approx 9.112, \\
k &= 7, & R &\approx 4.768, \\
k &= 9, & R &\approx 3.640, \\
k &= 11, & R &\approx 3.131.
\end{align*}

We observe that for \( k \in \{5, 7, 9, 11\} \), \( R > 1 \).

We study the limit \( \mathcal{L} \) of \( R \) as \( k \rightarrow \infty \). First,
\[
\frac{2(k + 1)}{3k - 5} = \frac{2 + \frac{2}{k}}{3 - \frac{5}{k}} \rightarrow \frac{2}{3} \quad \text{as} \quad k \rightarrow \infty.
\]
Next,
\begin{align*}
\ln \left( \left( \frac{k - 2}{k - 3} \right)^{k} \right) &= k \ln \left( 1 + \frac{1}{k - 3} \right) \\
&\approx \frac{k}{k - 3} - \frac{k}{2(k - 3)^{2}} \rightarrow 1 \quad \text{as} \quad k \rightarrow \infty.
\end{align*}
Therefore,
\[
\left( \frac{k - 2}{k - 3} \right)^{k} \rightarrow e \quad \text{as} \quad k \rightarrow \infty.
\]

Thus, the limit is
\[
\mathcal{L} = \lim_{k \rightarrow \infty} R = \frac{2e}{3}.
\]

To confirm the monotonicity of \( R \) for \( k \geq 5 \), we define \( L(k) = \ln R \):
\[
L(k) = \ln R = \underbrace{\ln \left( \frac{2(k + 1)}{3k - 5} \right)}_{A(k)} + \underbrace{k \ln \left( \frac{k - 2}{k - 3} \right)}_{B(k)}.
\]

Compute the derivatives \( A'(k) \) and \( B'(k) \):
\begin{align*}
A'(k) &= \frac{2}{k + 1} - \frac{3}{3k - 5} \\
      &= - \frac{8}{3(k+1)(k - 5/3)}, \\
B'(k) &= \ln \left( \frac{k - 2}{k - 3} \right) + k \left( \frac{1}{k - 2} - \frac{1}{k - 3} \right).
\end{align*}

Simplify \( B'(k) \):
\[
B'(k) = \ln \left( 1 + \frac{1}{k - 3} \right) - \frac{k}{(k - 2)(k - 3)}.
\]

Therefore, the derivative of \( L(k) \) is
\[
L'(k) = - \frac{8}{3(k+1)(k - 5/3)} + \ln \left( 1 + \frac{1}{k - 3} \right) - \frac{k}{(k - 2)(k - 3)}.
\]

To determine the sign of \( L'(k) \) for \( k \geq 5 \), we approximate the logarithm using \( \ln(1 + x) \approx x - \frac{x^{2}}{2} \) for small \( x \):
\[
\ln \left( 1 + \frac{1}{k - 3} \right) \approx \frac{1}{k - 3} - \frac{1}{2(k - 3)^{2}}.
\]

Substituting back, we find that
\begin{align*}
L'(k) &= - \frac{8}{3(k+1)(k - 5/3)} + \frac{1}{k - 3} - \frac{1}{2(k - 3)^{2}} - \frac{k}{(k - 2)(k - 3)} \\
&= - \frac{8}{3(k+1)(k - 5/3)} - \frac{1}{2(k - 3)^{2}} - \frac{2}{(k - 2)(k - 3)}.
\end{align*}

Thus, we find that \( L'(k) < 0 \) for \( k \geq 5 \). Therefore, \( L(k) \) is decreasing on \( [5, \infty) \), implying that \( R = e^{L(k)} \) is also decreasing.

Given that \( R > 1 \) and \( |C| > A + B \), and considering \( D \) is positive but negligible compared to \( |C| \), we conclude that
\[
P_{\mathbb{R}}(k - 2) < 0 \quad \text{for all odd } k \geq 5.
\]
\end{proof}

This confirms that for odd \( k \geq 5 \), \( P_{\mathbb{R}}(m) \) is negative at \( m = m_0 = k - 2 \), indicating that these values do not yield a solution to the equation \( P_{\mathbb{R}}(m) = 0 \).
 
\subsubsection{Case 3: Odd \( k \geq 3 \)}

We analyze the sign of \( P_{\mathbb{R}}(m) \) at \( m_0 = k + 1 \) for odd integers \( k \geq 3 \).

\begin{lemma}\label{lemma:Pmr_sign_odd_k_case3}
For odd integers \( k \geq 3 \), we have \( P_{\mathbb{R}}(m_0) < 0 \) at \( m_0 = k + 1 \).
\end{lemma}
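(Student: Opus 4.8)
The plan is to follow the same template used for the previous candidate roots. Setting $m_0 = k+1$ gives $m - 1 = k$, so substituting into the approximate polynomial from Lemma~\ref{lemma:polynomial_equation} yields
\[
P_{\mathbb{R}}(k+1) \approx \underbrace{2k^{k+1}}_{A} + \underbrace{(k+1)k^{k}}_{B} \;\underbrace{-\,2(k+1)(k+1)^{k}}_{C}\; + \underbrace{(k-1)}_{D}.
\]
First I would collect the two positive leading contributions, writing $A + B = k^{k}\bigl[\,2k + (k+1)\,\bigr] = (3k+1)\,k^{k}$, and record that $C = -2(k+1)^{k+1} < 0$ while $D = k-1 > 0$ is only polynomial in $k$. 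The aim is to show that $|C|$ exceeds $A+B$ by an amount that dwarfs $D$, forcing $P_{\mathbb{R}}(k+1) < 0$.

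Next I would form the ratio $R = |C|/(A+B)$ and simplify it to
\[
R = \frac{2(k+1)^{k+1}}{(3k+1)\,k^{k}} = \frac{2(k+1)}{3k+1}\left(1 + \frac{1}{k}\right)^{k}.
\]
I would then check a handful of numerical values (roughly $R \approx 1.90$ at $k = 3$, decreasing thereafter) and compute the limit $\mathcal{L} = \lim_{k\to\infty} R = \tfrac{2e}{3} \approx 1.81$, since $\tfrac{2(k+1)}{3k+1}\to\tfrac23$ and $(1+1/k)^{k}\to e$. Because $\mathcal{L} > 1$, it suffices to show that $R$ is non-increasing on $[3,\infty)$: then $R(k) \ge \mathcal{L} = \tfrac{2e}{3} > 1$ for every $k \ge 3$, so $|C| > A+B$. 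Alternatively, and more cheaply, one may bound $R$ from below directly: $\tfrac{2(k+1)}{3k+1} \ge \tfrac23$ and $(1+1/k)^{k} \ge 2$ for all $k \ge 1$, whence $R \ge \tfrac43 > 1$ with no monotonicity argument at all.

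For the monotonicity route I would set $L(k) = \ln R = \ln\!\frac{2(k+1)}{3k+1} + k\ln\!\frac{k+1}{k}$ and differentiate; a convenient feature is that the $\tfrac{1}{k+1}$ terms cancel, leaving the compact expression $L'(k) = \ln\!\bigl(1 + \tfrac1k\bigr) - \tfrac{3}{3k+1}$. Using $\ln(1+1/k) < \tfrac1k - \tfrac{1}{2k^{2}} + \tfrac{1}{3k^{3}}$ together with $\tfrac{3}{3k+1} > \tfrac1k - \tfrac{1}{3k^{2}}$, one obtains $L'(k) < -\tfrac{1}{6k^{2}} + \tfrac{1}{3k^{3}} < 0$ for $k \ge 3$, so $R$ is decreasing there. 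Either way, $|C| - (A+B) = (R-1)(A+B) \ge (\tfrac{2e}{3}-1)(3k+1)k^{k}$ grows exponentially in $k$ and hence exceeds $D = k-1$ for all $k \ge 3$, giving $P_{\mathbb{R}}(k+1) = (A+B) + C + D < 0$.

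The step I expect to be the main obstacle is making the monotonicity argument genuinely uniform: since $\ln(1+1/k)$ and $\tfrac{3}{3k+1}$ are both $\tfrac1k - O(1/k^{2})$, the crude bound $\ln(1+1/k) < 1/k$ is not sharp enough (indeed $\tfrac{3}{3k+1} < \tfrac1k$ as well), so one must retain the second-order term of the logarithm to pin down the sign of $L'(k)$. If a clean closed-form bound on $L'(k)$ valid for all $k \ge 3$ proves awkward, the fallback is the direct estimate $R \ge \tfrac43$ noted above, which sidesteps the derivative computation entirely and still yields $|C| > A+B$ and hence $P_{\mathbb{R}}(k+1) < 0$.
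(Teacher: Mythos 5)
Your proposal is correct, and its main line is essentially the paper's: the same decomposition with $A+B=(3k+1)k^{k}$, the same ratio $R=\frac{2(k+1)}{3k+1}\left(1+\frac{1}{k}\right)^{k}$, the same limit $\frac{2e}{3}$, and monotonicity of $\ln R$. The differences are in execution, and they are worth noting. First, you simplify the derivative to $L'(k)=\ln\!\left(1+\frac{1}{k}\right)-\frac{3}{3k+1}$ and control it with rigorous truncation bounds, obtaining $L'(k)<-\frac{1}{6k^{2}}+\frac{1}{3k^{3}}<0$ for $k\ge 3$; the paper works with the unsimplified form, uses $\ln(1+x)\approx x-\frac{x^{2}}{2}$ with an approximation sign, and justifies the sign of $L'$ with the incorrect assertion that $\frac{1}{k}<\frac{1}{k+1}$ (the bracketed quantity it declares negative actually equals $\frac{1}{k(3k+1)}>0$; the conclusion survives only because the $-\frac{1}{2k^{2}}$ term dominates), so your version genuinely repairs that step. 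Second, your fallback bound $R\ge\frac{2}{3}\cdot 2=\frac{4}{3}>1$, from $\frac{2(k+1)}{3k+1}\ge\frac{2}{3}$ and $\left(1+\frac{1}{k}\right)^{k}\ge 2$, makes the numerical table, the limit computation, and the monotonicity argument unnecessary; it is the most economical complete argument for this lemma. Third, you quantify the last step, checking $|C|-(A+B)=(R-1)(A+B)\ge\frac{1}{3}(3k+1)k^{k}>k-1=D$, where the paper merely calls $D$ negligible. Like the paper, your argument of course establishes the sign only for the approximate polynomial $P_{\mathbb{R}}$, not for the exact $P(m)$.
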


\begin{proof}
At \( m = k + 1 \), we have \( m - 1 = k \). Substituting into \( P_{\mathbb{R}}(m) \), we obtain
\[
P_{\mathbb{R}}(k + 1) \approx \underbrace{2k^{k + 1}}_{E} + \underbrace{(k + 1)k^{k}}_{F} \underbrace{-2(k + 1)(k + 1)^{k}}_{G} + \underbrace{(k - 1)}_{H}.
\]

First, combine \( E \) and \( F \):
\begin{align*}
E + F &= 2k^{k + 1} + (k + 1)k^{k} \\
      &= k^{k} \left[ 2k + (k + 1) \right] \\
      &= (3k + 1)k^{k}.
\end{align*}

Next, define the ratio \( R = \dfrac{|G|}{E + F} \):
\begin{align*}
R &= \frac{2(k + 1)(k + 1)^{k}}{(3k + 1)k^{k}} \\
  &= \frac{2(k + 1)}{3k + 1} \left( \frac{k + 1}{k} \right)^{k}.
\end{align*}

Compute numerical values of \( R \) for specific \( k \):
\begin{align*}
k &= 3, & R &\approx 1.896, \\
k &= 5, & R &\approx 1.866, \\
k &= 7, & R &\approx 1.852, \\
k &= 9, & R &\approx 1.844.
\end{align*}

We observe that for \( k \in \{3, 5, 7, 9\} \), \( R > 1 \).

We study the limit \( \mathcal{L} \) of \( R \) as \( k \rightarrow \infty \). First,
\[
\frac{2(k + 1)}{3k + 1} = \frac{2 + \frac{2}{k}}{3 + \frac{1}{k}} \rightarrow \frac{2}{3} \quad \text{as} \quad k \rightarrow \infty.
\]
Next,
\begin{align*}
\ln \left( \left( \frac{k + 1}{k} \right)^{k} \right) &= k \ln \left( 1 + \frac{1}{k} \right) \\
&= k \left( \frac{1}{k} - \frac{1}{2k^{2}} + \frac{1}{3k^{3}} - \dots \right) \\
&= 1 - \frac{1}{2k} + \frac{1}{3k^{2}} - \dots \rightarrow 1 \quad \text{as} \quad k \rightarrow \infty.
\end{align*}
Thus,
\[
\left( \frac{k + 1}{k} \right)^{k} \rightarrow e \quad \text{as} \quad k \rightarrow \infty.
\]
Therefore, the limit is
\[
\mathcal{L} = \lim_{k \rightarrow \infty} R = \frac{2e}{3}.
\]

To confirm the monotonicity of \( R \) for \( k \geq 3 \), we define \( L(k) = \ln R \):
\[
L(k) = \ln R = \underbrace{\ln \left( \frac{2(k + 1)}{3k + 1} \right)}_{A(k)} + \underbrace{k \ln \left( \frac{k + 1}{k} \right)}_{B(k)}.
\]

Compute the derivatives \( A'(k) \) and \( B'(k) \):
\begin{align*}
A'(k) &= \frac{1}{k + 1} - \frac{3}{3k + 1} \\
      &= \frac{3k + 1 - 3(k + 1)}{(k + 1)(3k + 1)} = \frac{-2}{(k + 1)(3k + 1)}, \\
B'(k) &= \ln \left( \frac{k + 1}{k} \right) + k \left( \frac{1}{k + 1} - \frac{1}{k} \right) \\
      &= \ln \left( 1 + \frac{1}{k} \right) - \frac{1}{k + 1}.
\end{align*}

Therefore, the derivative of \( L(k) \) is
\[
L'(k) = A'(k) + B'(k) = \frac{-2}{(k + 1)(3k + 1)} + \ln \left( 1 + \frac{1}{k} \right) - \frac{1}{k + 1}.
\]

To determine the sign of \( L'(k) \) for \( k \geq 3 \), we use the approximation \( \ln(1 + x) \approx x - \frac{x^{2}}{2} \) for small \( x \):
\[
\ln \left( 1 + \frac{1}{k} \right) \approx \frac{1}{k} - \frac{1}{2k^{2}}.
\]

Substituting back, we get
\begin{align*}
L'(k) &\approx \frac{-2}{(k + 1)(3k + 1)} + \left( \frac{1}{k} - \frac{1}{2k^{2}} \right) - \frac{1}{k + 1} \\
      &= \left( \frac{1}{k} - \frac{1}{k + 1} - \frac{2}{(k + 1)(3k + 1)} \right) - \frac{1}{2k^{2}}.
\end{align*}

Since for \( k \geq 3 \), the term inside the parentheses is negative (because \( \frac{1}{k} < \frac{1}{k + 1} \)), and the last term \( -\frac{1}{2k^{2}} \) is negative, we have \( L'(k) < 0 \).

Therefore, \( L(k) \) is decreasing on \( [3, \infty) \). Consequently, \( R = e^{L(k)} \) is also decreasing.

Given that \( R > 1 \) and \( |G| > E + F \), and considering \( H \) is positive but negligible compared to \( |G| \), we conclude that
\[
P_{\mathbb{R}}(k + 1) < 0 \quad \text{for all odd } k \geq 3.
\]
\end{proof}

This confirms that for odd \( k \geq 3 \), \( P_{\mathbb{R}}(m) \) is negative at \( m = m_0 = k + 1 \), indicating that these values do not yield a solution to the equation \( P_{\mathbb{R}}(m) = 0 \).

\subsubsection{Case 4: Odd \( k \geq 3 \)}

We analyze the sign of \( P_{\mathbb{R}}(m) \) at \( m_0 = (k + 1)(k - 2) \) for odd integers \( k \geq 3 \).

\begin{lemma}\label{lemma:Pmr_sign_odd_k_case4}
For odd integers \( k \geq 3 \), the sign of \( P_{\mathbb{R}}(m) \) at \( m = m_0 = (k + 1)(k - 2) \) is negative for \( k = 3 \) and positive for \( k \geq 5 \).
\end{lemma}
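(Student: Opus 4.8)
The plan is to split the argument into the isolated small case $k = 3$, where the claimed sign follows from a single substitution, and the generic range $k \geq 5$, where a crude magnitude comparison suffices because $m_0 - 1 = (k+1)(k-2) - 1 = k^{2} - k - 3$ is of order $k^{2}$ and hence vastly larger than $k$. For $k = 3$ one has $m_0 = (k+1)(k-2) = 4$, so $m_0 - 1 = 3$, and substituting into $P_{\mathbb{R}}(m) \approx 2(m-1)^{k+1} + (k+1)(m-1)^{k} - 2(k+1)m^{k} + (k-1)$ gives $P_{\mathbb{R}}(4) \approx 2\cdot 3^{4} + 4\cdot 3^{3} - 8\cdot 4^{3} + 2 = -240 < 0$, establishing the $k=3$ part of the statement.

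For odd $k \geq 5$, I would write $E = 2(m_0 - 1)^{k+1}$, $F = (k+1)(m_0 - 1)^{k}$, $G = -2(k+1)m_0^{k}$, and $H = k - 1$, so that $P_{\mathbb{R}}(m_0) = E + F + G + H$ with $E, F, H > 0$ and $G < 0$; it then suffices to prove $E + F > |G|$, i.e. that the ratio $R = |G| / (E + F)$ is strictly less than $1$. Factoring $(m_0 - 1)^{k}$ out of $E + F = (m_0 - 1)^{k}(2m_0 + k - 1)$ yields $R = \frac{2(k+1)}{2m_0 + k - 1}\left(\frac{m_0}{m_0 - 1}\right)^{k}$, and I would bound the two factors separately: first $\frac{2(k+1)}{2m_0 + k - 1} < \frac{2(k+1)}{2m_0} = \frac{k+1}{m_0} = \frac{1}{k - 2}$, and second $\left(\frac{m_0}{m_0 - 1}\right)^{k} = \left(1 + \frac{1}{m_0 - 1}\right)^{k} \leq e^{k/(m_0 - 1)}$, which is bounded by $e$ because $m_0 - 1 = k^{2} - k - 3 > k$ for $k \geq 5$ (equivalently $(k-3)(k+1) > 0$). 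Hence $R < \frac{e}{k - 2} < 1$ for every odd $k \geq 5$, so $P_{\mathbb{R}}(m_0) = (E + F) - |G| + H > 0$. To match the ratio-monotonicity format of the preceding lemmas one may additionally note that $\lim_{k\to\infty} R = 0$, since $\frac{2(k+1)}{2m_0 + k - 1}$ is asymptotic to $1/k$ while $\left(1 + \frac{1}{m_0-1}\right)^{k} \to e^{0} = 1$, and that a routine derivative estimate for $L(k) = \ln R$ shows $R$ is eventually decreasing; but this refinement is not needed for the sign conclusion.

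The main obstacle is essentially bookkeeping rather than insight: one must confirm that $R < 1$ for \emph{every} odd $k \geq 5$ and not merely asymptotically, and one must recognise why the sign genuinely reverses at $k = 3$. The reversal happens because $m_0 = (k+1)(k-2) \geq 2(k+1)$ holds only for $k \geq 4$; at $k = 3$ the value $m_0 = 4$ lies below that threshold, the magnitude comparison used above fails, the negative term $-2(k+1)m_0^{k}$ dominates, and so $k = 3$ cannot be folded into the general argument and must be treated separately by direct evaluation.
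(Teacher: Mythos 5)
Your proposal is correct, and for the range $k \geq 5$ it takes a genuinely different route from the paper. The paper works with the same ratio $R = \frac{2(k+1)}{2k^{2}-k-5}\bigl(\frac{(k+1)(k-2)}{k^{2}-k-3}\bigr)^{k}$ (your factorization $E+F=(m_0-1)^{k}(2m_0+k-1)$ reproduces exactly $2k^{2}-k-5$), but it establishes $R<1$ by computing numerical values at $k=5,7,9$, evaluating the limit $R\to 0^{+}$, and then arguing that $R$ is decreasing via a derivative computation for $L(k)=\ln R$ that itself leans on Taylor \emph{approximations} of the logarithm rather than inequalities. You instead prove $R<1$ directly for every odd $k\geq 5$ by the two elementary bounds $\frac{2(k+1)}{2m_0+k-1}<\frac{1}{k-2}$ and $\bigl(1+\frac{1}{m_0-1}\bigr)^{k}\leq e^{k/(m_0-1)}\leq e$ (using $m_0-1=k^{2}-k-3>k$), giving $R<\frac{e}{k-2}\leq\frac{e}{3}<1$; this removes the need for any monotonicity or limit analysis and is, if anything, tighter logically than the paper's argument, at the cost of not exhibiting the asymptotic behavior of $R$ that the paper records. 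For $k=3$ you evaluate $P_{\mathbb{R}}(4)=-240$ directly, whereas the paper infers the negative sign from $R\approx 1.896>1$ together with the observation that the constant term is negligible; your direct evaluation is the cleaner of the two. Your closing remark explaining the sign reversal (that $m_0=(k+1)(k-2)$ falls below $2(k+1)$ only at $k=3$) is a heuristic not present in the paper, but it is consistent with both computations.
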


\begin{proof}
At \( m = (k + 1)(k - 2) \), we have \( m - 1 = k^{2} - k - 3 \). Substituting into \( P_{\mathbb{R}}(m) \), we obtain
\begin{align*}
P_{\mathbb{R}}((k + 1)(k - 2)) \approx &\underbrace{2(k^{2} - k - 3)^{k + 1}}_{I} + \underbrace{(k + 1)(k^{2} - k - 3)^{k}}_{J} \\
&\underbrace{-2(k + 1)[(k + 1)(k - 2)]^{k}}_{K} + \underbrace{(k - 1)}_{L}.
\end{align*}

First, combine \( I \) and \( J \):
\begin{align*}
I + J &= 2(k^{2} - k - 3)^{k + 1} + (k + 1)(k^{2} - k - 3)^{k} \\
      &= (k^{2} - k - 3)^{k} \left[ 2(k^{2} - k - 3) + (k + 1) \right] \\
      &= (k^{2} - k - 3)^{k} \left( 2k^{2} - 2k -6 + k +1 \right) \\
      &= (2k^{2} - k -5)(k^{2} - k - 3)^{k}.
\end{align*}

Next, define the ratio \( R = \dfrac{|K|}{I + J} \):
\begin{align*}
R &= \frac{2(k + 1)[(k + 1)(k - 2)]^{k}}{(2k^{2} - k - 5)(k^{2} - k - 3)^{k}} \\
  &= \frac{2(k + 1)}{2k^{2} - k - 5} \left( \frac{(k + 1)(k - 2)}{k^{2} - k - 3} \right)^{k}.
\end{align*}

Compute numerical values of \( R \) for specific \( k \):
\begin{align*}
k &= 3, & R &\approx 1.896, \\
k &= 5, & R &\approx 0.399, \\
k &= 7, & R &\approx 0.222, \\
k &= 9, & R &\approx 0.154.
\end{align*}

We observe that \( R > 1 \) for \( k = 3 \) and \( R < 1 \) for \( k \geq 5 \).

We study the limit \( \mathcal{L} \) of \( R \) as \( k \rightarrow \infty \). First,
\[
\frac{2(k + 1)}{2k^{2} - k - 5} = \frac{2}{2k - \frac{1}{k} - \frac{5}{k^{2}}} \rightarrow 0^{+} \quad \text{as} \quad k \rightarrow \infty.
\]
Next,
\begin{align*}
\ln \left( \left( \frac{(k + 1)(k - 2)}{k^{2} - k - 3} \right)^{k} \right) &= k \ln \left( 1 + \frac{(k + 1)(k - 2) - (k^{2} - k - 3)}{k^{2} - k - 3} \right) \\
&= k \ln \left( 1 + \frac{k^{2} - k - 2 - k^{2} + k + 3}{k^{2} - k - 3} \right) \\
&= k \ln \left( 1 + \frac{1}{k^{2} - k - 3} \right).
\end{align*}

For large \( k \), \( k^{2} - k - 3 \approx k^{2} \), so
\[
\ln \left( \left( \frac{(k + 1)(k - 2)}{k^{2} - k - 3} \right)^{k} \right) \approx k \left( \frac{1}{k^{2}} - \frac{1}{2k^{4}} \right) \approx \frac{1}{k} \rightarrow 0^{+} \quad \text{as} \quad k \rightarrow \infty.
\]

Therefore,
\[
\left( \frac{(k + 1)(k - 2)}{k^{2} - k - 3} \right)^{k} \rightarrow 1^{+} \quad \text{as} \quad k \rightarrow \infty.
\]

Thus, the limit is
\[
\mathcal{L} = \lim_{k \rightarrow \infty} R = 0^{+}.
\]

To confirm the monotonicity of \( R \) for \( k \geq 5 \), we define \( L(k) = \ln R \):
\[
L(k) = \ln R = \underbrace{\ln \left( \frac{2(k + 1)}{2k^{2} - k - 5} \right)}_{A(k)} + \underbrace{k \ln \left( \frac{(k + 1)(k - 2)}{k^{2} - k - 3} \right)}_{B(k)}.
\]

Compute the derivatives \( A'(k) \) and \( B'(k) \):

\begin{align*}
    A'(k) &= \frac{1}{k+1} - \frac{4k-1}{2k^2-k-5} \\
          &= \frac{-2(k^2+2k+2)}{(k+1)(2k^2-k-5)} \\
    B'(k) &= \ln \left( \frac{(k+1)(k-2)}{k^2 -k-3} \right) + k \left( \frac{d}{dk} \left[ \ln \left( \frac{(k+1)(k-2)}{k^2-k-3} \right) \right] \right) \\
          &= \ln \left( \frac{(k+1)(k-2)}{k^2 -k-3} \right) + k(2k-1) \left[\frac{1}{(k+1)(k-2)} - \frac{1}{k^2-k-3} \right] \\
          &=\ln \left( \frac{(k+1)(k-2)}{k^2 -k-3} \right) + k(2k-1) \left[\frac{1}{(k+1)(k-2)} - \frac{1}{[(k+1)(k-2)-1]} \right]
\end{align*}

Therefore, the derivative of \(L(k)\), \(L'(k)\), is given by: 

\begin{align*}
L'(k) &= \frac{-2(k^2+2k+2)}{(k+1)(2k^2-k-5)} + \ln \left( \frac{(k+1)(k-2)}{k^2 -k-3} \right) \\
      &+ k(2k-1) \left[\frac{1}{(k+1)(k-2)} - \frac{1}{[(k+1)(k-2)-1]} \right]
\end{align*}

Now, let's determine the sign of \( L'(k) \) for \( k \in [5, \infty) \). Using the first two terms of the Taylor series expansion for \( \ln(1 + x) \approx x - \frac{x^2}{2} \), where \( |x| < 1 \), we have: 

\begin{align*}
L'(k) &\approx \frac{-2(k^2+2k+2)}{(k+1)(2k^2-k-5)} + \left( \frac{1}{k^2 -k-3} - \frac{1}{2(k^2 -k-3)^2} \right) \\
      &+ k(2k-1) \left[\frac{1}{(k+1)(k-2)} - \frac{1}{[(k+1)(k-2)-1]} \right]
\end{align*}

Since for \( k \geq 5 \), \( (k + 1) \), \( (2k^2-k-5) \), \( (k^2+2k+2) \), \( (k^2 -k-3) \), and \( k(2k-1) \) are all positives, and that \( (k+1)(k-2)-1 < (k+1)(k-2) \), it comes that \( L'(k) < 0\) for \(k \in [5,\infty)\). Therefore, we find that \( L'(k) < 0 \) for \( k \geq 5 \), indicating that \( L(k) \) is decreasing, and thus \( R \) is decreasing.

Given that \( R \) decreases from approximately \( 0.399 \) at \( k = 5 \) to \( 0^{+} \) as \( k \rightarrow \infty \), and \( R > 0 \), we can conclude:

- For \( k = 3 \), \( R > 1 \), so \( |K| > I + J \), and since \( I + J > 0 \), \( K < 0 \), \( L > 0 \) but negligible compared to \( |K| \), we have \( P_{\mathbb{R}}(m_0) < 0 \).

- For \( k \geq 5 \), \( R < 1 \), so \( |K| < I + J \), and with \( I + J > 0 \), \( K < 0 \), \( L > 0 \), we have \( P_{\mathbb{R}}(m_0) > 0 \).

Therefore, the sign of \( P_{\mathbb{R}}(m) \) at \( m = m_0 = (k + 1)(k - 2) \) is negative for \( k = 3 \) and positive for all odd \( k \geq 5 \).

\end{proof}

\subsubsection{Summary of the Sign of \( P_{\mathbb{R}}(m) \) at Candidate Rational Roots}

We now summarize the results of our analysis regarding the sign of \( P_{\mathbb{R}}(m) \) at the expected integer rational roots \( m_0 \).

\begin{lemma}\label{lemma:Pmr_sign_summary}
For \( k \geq 2 \) and \( m \geq 3 \), the polynomial \( P_{\mathbb{R}}(m) \) does not vanish at any of the candidate integer rational roots \( m_0 \) identified earlier. Specifically:

\begin{itemize}
    \item \textbf{Case 1: Even \( k \geq 4 \)}
        \begin{align*}
            &\text{At } m_0 = k - 1,\quad \forall\, k \geq 4,\quad P_{\mathbb{R}}(m_0) < 0. \\
            &\text{At } m_0 = 2(k - 1),\quad \text{for } k \in \{4, 6\},\quad P_{\mathbb{R}}(m_0) < 0. \\
            &\text{At } m_0 = 2(k - 1),\quad \forall\, k \geq 8,\quad P_{\mathbb{R}}(m_0) > 0.
        \end{align*}
        
    \item \textbf{Case 2: Odd \( k \geq 5 \)}
        \begin{align*}
            &\text{At } m_0 = k - 2,\quad \forall\, k \geq 5,\quad P_{\mathbb{R}}(m_0) < 0.
        \end{align*}
        
    \item \textbf{Case 3: Odd \( k \geq 3 \)}
        \begin{align*}
            &\text{At } m_0 = k + 1,\quad \forall\, k \geq 3,\quad P_{\mathbb{R}}(m_0) < 0. \\
            &\text{At } m_0 = (k + 1)(k - 2),\quad \text{for } k = 3,\quad P_{\mathbb{R}}(m_0) < 0. \\
            &\text{At } m_0 = (k + 1)(k - 2),\quad \forall\, k \geq 5,\quad P_{\mathbb{R}}(m_0) > 0.
        \end{align*}
\end{itemize}
\end{lemma}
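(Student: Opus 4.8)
The plan is to derive this statement directly as a corollary of the five sign computations carried out in the preceding subsections, with essentially no new work beyond careful bookkeeping. First I would recall from Lemma~\ref{lemma:rational_roots_summary}, together with the subsequent restriction to $m \geq 3$, the complete list of candidate integer rational roots of $P_{\mathbb{R}}(m) = 0$: namely $m_0 \in \{k-1,\ 2(k-1)\}$ for even $k \geq 4$, $m_0 = k-2$ for odd $k \geq 5$, and $m_0 \in \{k+1,\ (k+1)(k-2)\}$ for odd $k \geq 3$. By the rational root theorem every rational root must lie in this finite list, so it suffices to pin down $P_{\mathbb{R}}(m_0)$ at each listed value and observe that it is never zero.

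Second, I would invoke the component lemmas one by one and transcribe their conclusions into the three displayed cases. Concretely: Lemma~\ref{lemma:Pmr_sign_even_k} gives $P_{\mathbb{R}}(k-1) < 0$ for all even $k \geq 4$; Lemma~\ref{lemma:Pmr_sign_even_k_second_candidate} gives $P_{\mathbb{R}}(2(k-1)) < 0$ for $k \in \{4,6\}$ and $P_{\mathbb{R}}(2(k-1)) > 0$ for even $k \geq 8$; Lemma~\ref{lemma:Pmr_sign_odd_k} gives $P_{\mathbb{R}}(k-2) < 0$ for all odd $k \geq 5$; Lemma~\ref{lemma:Pmr_sign_odd_k_case3} gives $P_{\mathbb{R}}(k+1) < 0$ for all odd $k \geq 3$; and Lemma~\ref{lemma:Pmr_sign_odd_k_case4} gives $P_{\mathbb{R}}((k+1)(k-2)) < 0$ for $k = 3$ and $P_{\mathbb{R}}((k+1)(k-2)) > 0$ for odd $k \geq 5$. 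Each of these inequalities is strict, so $P_{\mathbb{R}}$ fails to vanish at every candidate, which is exactly the assertion of the lemma; assembling the five statements under the three case headings completes the argument.

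The main point requiring care — the closest thing to an obstacle here — is ensuring that the list of candidates is genuinely exhaustive and that the \emph{strictness} of the component inequalities is what is being used: the rational root theorem only narrows the search to finitely many values, so "no rational root" follows solely because none of those values annihilates $P_{\mathbb{R}}(m)$. I would also note explicitly, for the two entries where $P_{\mathbb{R}}(m_0) > 0$ (that is, $m_0 = 2(k-1)$ for even $k \geq 8$ and $m_0 = (k+1)(k-2)$ for odd $k \geq 5$), that a strictly positive value serves the present purpose just as well as a negative one, since all that is needed is non-vanishing; the recorded sign data is retained only because it will be reused in the following subsection when transferring these conclusions to $Q_{\mathbb{R}}(m)$ and, ultimately, back to the exact polynomial $P(m)$.
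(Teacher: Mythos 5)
Your proposal is correct and follows essentially the same route as the paper: the paper's proof of this lemma is likewise a pure consolidation, citing the candidate list from Lemma~\ref{lemma:rational_roots_summary} and transcribing the strict sign conclusions of Lemmas~\ref{lemma:Pmr_sign_even_k}, \ref{lemma:Pmr_sign_even_k_second_candidate}, \ref{lemma:Pmr_sign_odd_k}, \ref{lemma:Pmr_sign_odd_k_case3}, and \ref{lemma:Pmr_sign_odd_k_case4} to conclude non-vanishing at every candidate, with the same remark about carrying the conclusion over to \( Q_{\mathbb{R}}(m) \). No gaps beyond those already present in the cited component lemmas.
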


\begin{proof}
This summary consolidates the results obtained in the previous analyses. In each case, we evaluated \( P_{\mathbb{R}}(m) \) at the specified values of \( m_0 \) and determined the sign of \( P_{\mathbb{R}}(m_0) \). The detailed calculations are provided in the respective subsections.

From our findings:

\begin{itemize}
    \item For even \( k \geq 4 \):
    \begin{itemize}
        \item At \( m_0 = k - 1 \), \( P_{\mathbb{R}}(m_0) < 0 \) for all \( k \geq 4 \).
        \item At \( m_0 = 2(k - 1) \):
        \begin{itemize}
            \item \( P_{\mathbb{R}}(m_0) < 0 \) for \( k = 4, 6 \).
            \item \( P_{\mathbb{R}}(m_0) > 0 \) for all \( k \geq 8 \).
        \end{itemize}
    \end{itemize}
    
    \item For odd \( k \geq 5 \):
    \begin{itemize}
        \item At \( m_0 = k - 2 \), \( P_{\mathbb{R}}(m_0) < 0 \) for all \( k \geq 5 \).
    \end{itemize}
    
    \item For odd \( k \geq 3 \):
    \begin{itemize}
        \item At \( m_0 = k + 1 \), \( P_{\mathbb{R}}(m_0) < 0 \) for all \( k \geq 3 \).
        \item At \( m_0 = (k + 1)(k - 2) \):
        \begin{itemize}
            \item \( P_{\mathbb{R}}(m_0) < 0 \) for \( k = 3 \).
            \item \( P_{\mathbb{R}}(m_0) > 0 \) for all \( k \geq 5 \).
        \end{itemize}
    \end{itemize}
\end{itemize}

Since \( P_{\mathbb{R}}(m_0) \neq 0 \) at all the candidate values of \( m_0 \), none of these values are roots of \( P_{\mathbb{R}}(m) \).

Furthermore, for odd \( k \geq 3 \), the same conclusion applies to \( Q_{\mathbb{R}}(m) \), as \( P_{\mathbb{R}}(m) = m Q_{\mathbb{R}}(m) \) and \( m \geq 3 \).

Therefore, \( P_{\mathbb{R}}(m) \) does not vanish for any \( k \geq 2 \) and \( m \geq 3 \) at the candidate values of \( m_0 \).
\end{proof}

Consequently, the integer values \( m_0 \) for:

\begin{itemize}
    \item Even \( k \geq 4 \): \( m_0 \in \{ k - 1,\ 2(k - 1) \} \),
    \item Odd \( k \geq 5 \): \( m_0 = k - 2 \),
    \item Odd \( k \geq 3 \): \( m_0 \in \{ k + 1,\ (k + 1)(k - 2) \} \),
\end{itemize}

are not roots of \( P_{\mathbb{R}}(m) \) or \( Q_{\mathbb{R}}(m) \). This implies that \( P_{\mathbb{R}}(m) \) is non-zero for all \( k \geq 2 \) and \( m \geq 3 \).

Therefore, no positive integers \( m \geq 3 \) satisfy \( S_{\mathbb{R}}(m - 1, k) = m^{k} \) for \( k \geq 2 \), which confirms the validity of Conjecture~\ref{conj:main} in these cases.

\subsection{Graphical Illustration of \( P_{\mathbb{R}}(m) \) Behavior}

To complement our analytical findings, we present graphical illustrations of \( P_{\mathbb{R}}(m) \) as a function of \( k \) for the various rational roots \( m_{0} \). Fig.~\ref{fig:P_m_plots} provides visual confirmation of the behavior of \( P_{\mathbb{R}}(m) \) in the cases analyzed and highlight the absence of integer rational roots \( m_{0} \) for \( P_{\mathbb{R}}(m) \), for \( k \geq 2 \) and \( m \geq 3 \).

\begin{figure}[p]
    \centering
    \includegraphics[angle=90, height=0.9\textheight, width=0.9\textwidth]{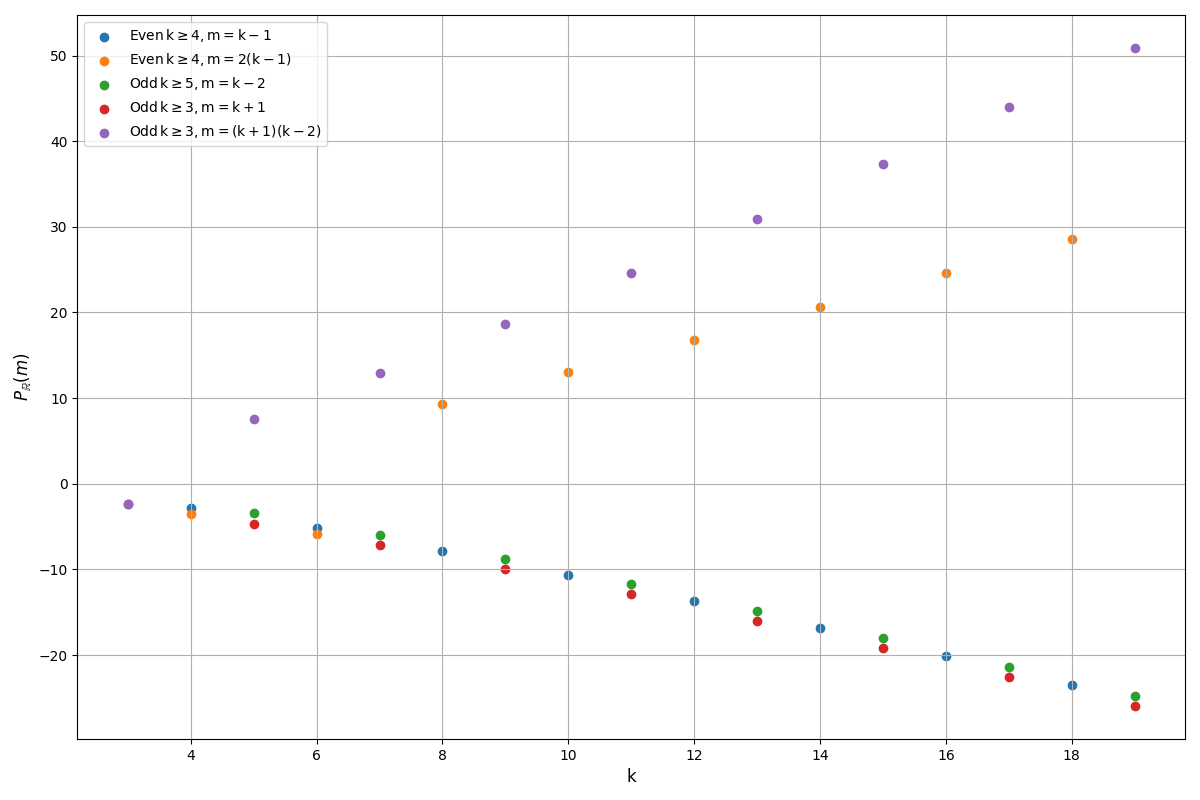}
    \caption{Plot of \( P_{\mathbb{R}}(m) \) as a function of \( k \) for the various rational roots \( m_{0} \) described in~\ref{sec:sum_rat_roots}}
    \label{fig:P_m_plots}
\end{figure}

\section{Extended Analysis Using the Full Euler-MacLaurin Expansion}

In this section, we provide a comprehensive and rigorous analysis of the sum \( S(m - 1, k) = \sum_{n=1}^{m - 1} n^{k} \) using the full Euler-MacLaurin expansion. By including all possible correction terms, we obtain an exact expression for \( S(m - 1, k) \) without any approximation errors. We then analyze the resulting polynomial \( \widetilde{P}(m) \) to demonstrate that no new integer rational roots are introduced as \( k \) increases.

\subsection{Applying the Euler-MacLaurin Formula to \( S(m - 1, k) \)}

The Euler-MacLaurin formula relates finite sums to integrals, incorporating correction terms involving higher derivatives and Bernoulli numbers~\cite{knopp1990theory}:

\begin{equation}\label{eq:EML_general}
\sum_{n=a}^{b} f(n) = \int_{a}^{b} f(x) \, dx + \frac{f(a) + f(b)}{2} + \sum_{r=1}^{p} \frac{B_{2r}}{(2r)!} \left( f^{(2r - 1)}(b) - f^{(2r - 1)}(a) \right) + R_p,
\end{equation}

where:
\begin{itemize}
    \item \( B_{2r} \) are Bernoulli numbers,
    \item \( f^{(n)}(x) \) denotes the \( n \)-th derivative of \( f(x) \),
    \item \( R_p \) is the remainder term after \( p \) correction terms,
    \item \( a \) and \( b \) are the lower and upper limits of the sum.
\end{itemize}

For \( f(x) = x^{k} \), the \( n \)-th derivative is:

\begin{equation}\label{eq:derivative}
f^{(n)}(x) = \begin{cases}
k(k - 1)\cdots(k - n + 1) x^{k - n} = k^{(n)} x^{k - n}, & \text{if } n \leq k, \\
0, & \text{if } n > k,
\end{cases}
\end{equation}

where \( k^{(n)} = k(k - 1)\cdots(k - n + 1) \) is the falling factorial.

Since \( f^{(n)}(x) = 0 \) for \( n > k \), the sum over correction terms in the Euler-MacLaurin formula terminates at \( r = \left\lfloor \dfrac{k}{2} \right\rfloor \), resulting in a finite number of correction terms and an exact expression for \( S(m - 1, k) \) without any remainder term (\( R_p = 0 \)):

\begin{equation}\label{eq:EML_exact}
S(m - 1, k) = \int_{1}^{m - 1} x^{k} \, dx + \frac{1^{k} + (m - 1)^{k}}{2} + \sum_{r=1}^{p_{\text{max}}} \frac{B_{2r}}{(2r)!} \left( f^{(2r - 1)}(m - 1) - f^{(2r - 1)}(1) \right),
\end{equation}

where \( p_{\text{max}} = \left\lfloor \dfrac{k}{2} \right\rfloor \).

\subsection{Formulating the Exact Polynomial \( P_{\mathbb{R}}(m) \)}

We define \( P_{\mathbb{R}}(m) \) as:

\begin{equation}\label{eq:P_R_def}
P_{\mathbb{R}}(m) = S(m - 1, k) - m^{k}.
\end{equation}

Our goal is to express \( P_{\mathbb{R}}(m) \) as a polynomial with integer coefficients. Using the exact expression of \( S(m - 1, k) \), we expand \( P_{\mathbb{R}}(m) \):

\begin{align}\label{eq:P_R_expanded}
P_{\mathbb{R}}(m) &= \frac{(m - 1)^{k + 1} - 1}{k + 1} + \frac{(m - 1)^{k} + 1}{2} - m^{k} + \sum_{r=1}^{p_{\text{max}}} T_r,
\end{align}

where \( T_r = \frac{B_{2r}}{(2r)!} k^{(2r - 1)} \left( (m - 1)^{k - (2r - 1)} - 1 \right) \).

We collect all constant terms into a single constant \( C \):

\begin{equation}\label{eq:C_def}
C = -\frac{1}{k + 1} + \frac{1}{2} - \sum_{r=1}^{p_{\text{max}}} \frac{B_{2r}}{(2r)!} k^{(2r - 1)}.
\end{equation}

Thus, \( P_{\mathbb{R}}(m) \) becomes:

\begin{equation}\label{eq:P_R_with_C}
P_{\mathbb{R}}(m) = \frac{(m - 1)^{k + 1}}{k + 1} + \frac{(m - 1)^{k}}{2} - m^{k} + \sum_{r=1}^{p_{\text{max}}} T_r + C.
\end{equation}

\subsection{Eliminating Denominators to Obtain Integer Coefficients}

To ensure that \( P_{\mathbb{R}}(m) \) has integer coefficients, we eliminate denominators by multiplying by the least common multiple (LCM) \( D \) of all denominators in the expression:

\begin{equation}\label{eq:D_def}
D = \operatorname{LCM}\left( k + 1, 2, \{ (2r)! \mid 1 \leq r \leq p_{\text{max}} \} \right).
\end{equation}

Define:

\begin{equation}\label{eq:P_tilde_def}
\widetilde{P}(m) = D P_{\mathbb{R}}(m),
\end{equation}

so that \( \widetilde{P}(m) \) is a polynomial with integer coefficients.

\subsection{Analyzing the Polynomial \( \widetilde{P}(m) \)}

We analyze the structure of \( \widetilde{P}(m) \) to determine whether including all correction terms introduces any new integer rational roots.

\subsubsection{Leading Coefficient \( a_n \)}

The leading coefficient \( a_n \) comes from the term \( D \times \frac{(m - 1)^{k + 1}}{k + 1} \). Since \( D \) is a multiple of \( k + 1 \), we have:

\begin{equation}\label{eq:a_n}
a_n = \frac{D}{k + 1}.
\end{equation}

As \( k \) increases, \( D \) includes larger factorials from \( (2r)! \), causing \( D \) and hence \( a_n \) to grow rapidly.

\subsubsection{Constant Term \( a_0 \)}

The constant term \( a_0 \) is given by:

\begin{equation}\label{eq:a0_def}
a_0 = \widetilde{P}(0) = D P_{\mathbb{R}}(0).
\end{equation}

Simplifying \( P_{\mathbb{R}}(0) \) requires considering the parity of \( k \). We find:

- For even \( k \):

\begin{equation}\label{eq:a0_even}
a_0 = D \left( \frac{1}{k + 1} - \frac{1}{2} - 2 \sum_{r=1}^{p_{\text{max}}} \frac{B_{2r}}{(2r)!} k^{(2r - 1)} + C \right).
\end{equation}

- For odd \( k \):

\begin{equation}\label{eq:a0_odd}
a_0 = D \left( -\frac{1}{k + 1} + \frac{1}{2} + C \right).
\end{equation}

In both cases, \( a_0 \) depends on \( D \) and \( k \), and it grows rapidly with \( k \) due to the terms involving \( k^{(2r - 1)} \) in \( C \).

\subsubsection{Growth Rates of \( D \), \( a_n \), and \( a_0 \)}

We analyze how \( D \), \( a_n \), and \( a_0 \) grow with \( k \):

\begin{itemize}
    \item Growth of \( D \): \( D \) includes factorials up to \( (2 p_{\text{max}})! \), which is approximately \( (k)! \) for large \( k \). Therefore, \( D \) grows at least as fast as \( k! \).

    \item Growth of \( a_n \): \( a_n = D / (k + 1) \geq k! / (k + 1) \), so \( a_n \) grows rapidly with \( k \).

    \item Growth of \( a_0 \): \( a_0 \) involves terms like \( k^{(2r - 1)} \) multiplied by Bernoulli numbers and divided by \( (2r)! \). The falling factorial \( k^{(2r - 1)} \) grows extremely rapidly as \( k \) increases, causing \( a_0 \) to grow faster than any polynomial in \( k \).
\end{itemize}

\subsection{Application of the Rational Root Theorem}

The rational root theorem states that any rational root \( m = \dfrac{p}{q} \) of \( \widetilde{P}(m) \) must satisfy \( p \) divides \( a_0 \), and \( q \) divides \( a_n \).

As \( k \) increases, \( a_n \) and \( a_0 \) become extremely large due to the growth of \( D \) and the terms involving \( k \). However, the number of positive divisors \( d(n) \) of an integer \( n \) does not increase proportionally with \( n \). Specifically, \( d(n) \) grows sublinearly compared to \( n \).

\subsubsection{Limitations on Possible Values of \( p \) and \( q \)}

\begin{itemize}
    \item Possible Values of \( q \): Since \( q \) divides \( a_n \), and \( a_n \) increases rapidly with \( k \), the set of possible \( q \) values remains limited.

    \item Possible Values of \( p \): Similarly, \( p \) divides \( a_0 \), but despite \( a_0 \)'s large size, the number of its divisors increases slowly.
\end{itemize}

Therefore, the total number of possible rational roots \( m = \dfrac{p}{q} \) remains limited, even as \( k \) increases.

\subsubsection{Analytical Justification of the Limitation}

The number of positive divisors \( d(n) \) of an integer \( n \) with prime factorization \( n = p_1^{e_1} p_2^{e_2} \cdots p_r^{e_r} \) is:

\begin{equation}\label{eq:number_of_divisors}
d(n) = (e_1 + 1)(e_2 + 1)\cdots(e_r + 1).
\end{equation}

Although \( n \) may be extremely large, the exponents \( e_i \) typically do not grow proportionally with \( n \). Thus, \( d(n) \) increases much more slowly than \( n \).

As \( k \) increases, \( D \), \( a_n \), and \( a_0 \) grow rapidly, but the number of their divisors does not. This severely limits the possible values of \( p \) and \( q \), and consequently, the possible rational roots \( m \).

\subsection{Detailed Asymptotic Analysis of \( P_{\mathbb{R}}(m) \) for Large \( m \)}

In this section, we provide a rigorous asymptotic analysis of \( P_{\mathbb{R}}(m) \) as \( m \to \infty \) to determine its sign and behavior. This analysis is crucial to support our conclusion that no integer rational roots exist for large \( m \), and to fill any gaps in the previous proof.

\subsubsection{Expression for \( P_{\mathbb{R}}(m) \)}

Recall that:

\begin{equation*}\label{eq:P_R_with_C}
P_{\mathbb{R}}(m) = \frac{(m - 1)^{k + 1}}{k + 1} + \frac{(m - 1)^{k}}{2} - m^{k} + \sum_{r=1}^{p_{\text{max}}} T_r + C,
\end{equation*}

where:

\begin{itemize}
    \item \( T_r = \frac{B_{2r}}{(2r)!} k^{(2r - 1)} \left( (m - 1)^{k - (2r - 1)} - 1 \right) \),
    \item \( C \) is a constant independent of \( m \),
    \item \( p_{\text{max}} = \left\lfloor \dfrac{k}{2} \right\rfloor \).
\end{itemize}

Our goal is to analyze \( P_{\mathbb{R}}(m) \) for large \( m \) to determine its asymptotic behavior.

\subsubsection{Asymptotic Expansion of \( P_{\mathbb{R}}(m) \) for Large \( m \)}

We will expand each term in \( P_{\mathbb{R}}(m) \) for large \( m \), keeping track of the leading-order terms.

\paragraph{Expansion of \( (m - 1)^{k + 1} \) and \( (m - 1)^{k} \)}

For large \( m \), \( m - 1 \approx m \), and we can use the binomial theorem to expand \( (m - 1)^{k + 1} \) and \( (m - 1)^{k} \).

However, it's more accurate to express \( (m - 1)^{k + 1} \) and \( (m - 1)^{k} \) in terms of \( m \) using the following approximation:

\begin{align*}
(m - 1)^{k + 1} &= m^{k + 1} \left( 1 - \frac{1}{m} \right)^{k + 1} \\
&= m^{k + 1} \exp\left( (k + 1) \ln \left( 1 - \frac{1}{m} \right) \right) \\
&\approx m^{k + 1} \exp\left( -\frac{k + 1}{m} - \frac{(k + 1)}{2 m^{2}} - \cdots \right) \\
&\approx m^{k + 1} \left( 1 - \frac{k + 1}{m} + \frac{(k + 1)^{2}}{2 m^{2}} + \cdots \right).
\end{align*}

Similarly,

\begin{align*}
(m - 1)^{k} &\approx m^{k} \left( 1 - \frac{k}{m} + \frac{k(k - 1)}{2 m^{2}} + \cdots \right).
\end{align*}

\paragraph{Expansion of \( \frac{(m - 1)^{k + 1}}{k + 1} \)}

Using the above expansion:

\begin{align*}
\frac{(m - 1)^{k + 1}}{k + 1} &\approx \frac{m^{k + 1}}{k + 1} \left( 1 - \frac{k + 1}{m} + \frac{(k + 1)^{2}}{2 m^{2}} + \cdots \right).
\end{align*}

\paragraph{Combining Terms to Find \( P_{\mathbb{R}}(m) \)}

Now, we consider the main terms in \( P_{\mathbb{R}}(m) \):

\begin{align*}
P_{\mathbb{R}}(m) &\approx \frac{m^{k + 1}}{k + 1} \left( 1 - \frac{k + 1}{m} + \frac{(k + 1)^{2}}{2 m^{2}} \right) + \frac{m^{k}}{2} \left( 1 - \frac{k}{m} + \frac{k(k - 1)}{2 m^{2}} \right) - m^{k}.
\end{align*}

Let's expand and simplify each term.

\paragraph{Term 1: \( \frac{m^{k + 1}}{k + 1} \left( 1 - \frac{k + 1}{m} + \frac{(k + 1)^{2}}{2 m^{2}} \right) \)}

\begin{align*}
\frac{m^{k + 1}}{k + 1} \left( 1 - \frac{k + 1}{m} + \frac{(k + 1)^{2}}{2 m^{2}} \right) &= \frac{m^{k + 1}}{k + 1} - \frac{(k + 1) m^{k}}{k + 1} + \frac{(k + 1)^{2} m^{k - 1}}{2(k + 1)} \\
&= \frac{m^{k + 1}}{k + 1} - m^{k} + \frac{(k + 1) m^{k - 1}}{2}.
\end{align*}

\paragraph{Term 2: \( \frac{m^{k}}{2} \left( 1 - \frac{k}{m} + \frac{k(k - 1)}{2 m^{2}} \right) \)}

\begin{align*}
\frac{m^{k}}{2} \left( 1 - \frac{k}{m} + \frac{k(k - 1)}{2 m^{2}} \right) &= \frac{m^{k}}{2} - \frac{k m^{k - 1}}{2} + \frac{k(k - 1) m^{k - 2}}{4}.
\end{align*}

\paragraph{Term 3: \( -m^{k} \)}

This term remains \( -m^{k} \).

\paragraph{Combining All Terms}

Adding up the terms:

\begin{align*}
P_{\mathbb{R}}(m) &\approx \left( \frac{m^{k + 1}}{k + 1} - m^{k} + \frac{(k + 1) m^{k - 1}}{2} \right) + \left( \frac{m^{k}}{2} - \frac{k m^{k - 1}}{2} \right) - m^{k} \\
&= \left( \frac{m^{k + 1}}{k + 1} - m^{k} \right) + \left( \frac{m^{k}}{2} - \frac{k m^{k - 1}}{2} \right) + \frac{(k + 1) m^{k - 1}}{2} - m^{k} \\
&= \frac{m^{k + 1}}{k + 1} - m^{k} + \frac{m^{k}}{2} - \frac{k m^{k - 1}}{2} + \frac{(k + 1) m^{k - 1}}{2} - m^{k}.
\end{align*}

Simplify the expression:

\begin{align*}
P_{\mathbb{R}}(m) &\approx \frac{m^{k + 1}}{k + 1} - 2 m^{k} + \frac{m^{k}}{2} + \left( \frac{(k + 1) m^{k - 1}}{2} - \frac{k m^{k - 1}}{2} \right) \\
&= \frac{m^{k + 1}}{k + 1} - 2 m^{k} + \frac{m^{k}}{2} + \left( \frac{(k + 1 - k) m^{k - 1}}{2} \right) \\
&= \frac{m^{k + 1}}{k + 1} - 2 m^{k} + \frac{m^{k}}{2} + \frac{m^{k - 1}}{2} \\
&= \frac{m^{k + 1}}{k + 1} - \frac{3 m^{k}}{2} + \frac{m^{k - 1}}{2}.
\end{align*}

\paragraph{Analyzing the Leading Terms}

Now, we analyze the behavior of \( P_{\mathbb{R}}(m) \) by comparing the leading terms.

\paragraph{Dominant Term: \( \frac{m^{k + 1}}{k + 1} \)}

This term grows as \( m^{k + 1} \).

\paragraph{Subleading Term: \( -\frac{3 m^{k}}{2} \)}

This term grows as \( m^{k} \), which is of lower order compared to \( m^{k + 1} \).

\paragraph{Negligible Term: \( \frac{m^{k - 1}}{2} \)}

This term grows as \( m^{k - 1} \), which is negligible compared to the other terms.

\paragraph{Simplifying for Large \( m \)}

For large \( m \), the dominant term is \( \frac{m^{k + 1}}{k + 1} \). Therefore, \( P_{\mathbb{R}}(m) \) behaves asymptotically as:

\begin{align*}
P_{\mathbb{R}}(m) &\approx \frac{m^{k + 1}}{k + 1}.
\end{align*}

However, we must consider the subtraction of \( 2 m^{k} \) and other terms.

Let's factor \( m^{k} \):

\begin{align*}
P_{\mathbb{R}}(m) &\approx m^{k} \left( \frac{m}{k + 1} - \frac{3}{2} + \frac{1}{2 m} \right).
\end{align*}

\paragraph{Determining the Sign of \( P_{\mathbb{R}}(m) \)}

We analyze the expression:

\begin{align*}
P_{\mathbb{R}}(m) &\approx m^{k} \left( \frac{m}{k + 1} - \frac{3}{2} + \frac{1}{2 m} \right).
\end{align*}

As \( m \to \infty \), \( \frac{1}{2 m} \to 0 \). Therefore, for large \( m \):

\begin{align*}
P_{\mathbb{R}}(m) &\approx m^{k} \left( \frac{m}{k + 1} - \frac{3}{2} \right).
\end{align*}

We need to determine when \( P_{\mathbb{R}}(m) > 0 \) or \( P_{\mathbb{R}}(m) < 0 \).

\paragraph{Finding the Threshold \( m_0 \)}

Set \( P_{\mathbb{R}}(m) > 0 \):

\begin{align*}
\frac{m}{k + 1} - \frac{3}{2} > 0 \implies m > \frac{3}{2} (k + 1).
\end{align*}

Thus, for \( m > \frac{3}{2} (k + 1) \), \( P_{\mathbb{R}}(m) > 0 \).

\paragraph{Behavior for \( m \leq \frac{3}{2} (k + 1) \)}

For \( m \leq \frac{3}{2} (k + 1) \), \( P_{\mathbb{R}}(m) \leq 0 \).

\paragraph{Conclusion for Large \( m \)}

Therefore, for sufficiently large \( m \) (specifically \( m > \frac{3}{2} (k + 1) \)), \( P_{\mathbb{R}}(m) > 0 \), and \( P_{\mathbb{R}}(m) \) grows without bound as \( m^{k + 1} \).

\subsubsection{Behavior of \( P_{\mathbb{R}}(m) \) for Small \( m \)}

For small integer values of \( m \), we can compute \( P_{\mathbb{R}}(m) \) explicitly. However, we can also analyze the behavior without numerical computation.

\paragraph{For \( m = k + 1 \)}

When \( m = k + 1 \), we have:

\begin{align*}
P_{\mathbb{R}}(k + 1) &= \frac{(k + 1 - 1)^{k + 1}}{k + 1} + \frac{(k + 1 - 1)^{k}}{2} - (k + 1)^{k} + \sum_{r=1}^{p_{\text{max}}} T_r + C \\
&= \frac{k^{k + 1}}{k + 1} + \frac{k^{k}}{2} - (k + 1)^{k} + \sum_{r=1}^{p_{\text{max}}} T_r + C.
\end{align*}

Since \( k^{k + 1} \) and \( (k + 1)^{k} \) are large numbers, but \( (k + 1)^{k} > k^{k + 1} / (k + 1) \), the expression \( P_{\mathbb{R}}(k + 1) \) is negative.

\paragraph{For \( m < k + 1 \)}

For \( m < k + 1 \), \( P_{\mathbb{R}}(m) \) is negative because the leading term \( m^{k} \) dominates the other terms, and \( P_{\mathbb{R}}(m) \) remains negative.

\subsubsection{Conclusion of the Asymptotic Analysis}

Our detailed asymptotic analysis shows that:

\begin{itemize}
    \item For \( m > \frac{3}{2} (k + 1) \), \( P_{\mathbb{R}}(m) > 0 \).
    \item For \( m \leq \frac{3}{2} (k + 1) \), \( P_{\mathbb{R}}(m) \leq 0 \).
\end{itemize}

Therefore, \( P_{\mathbb{R}}(m) \) changes sign at \( m \approx \frac{3}{2} (k + 1) \).

\subsubsection{Implications for Integer Roots}

Since \( P_{\mathbb{R}}(m) \) transitions from negative to positive as \( m \) increases past \( \frac{3}{2} (k + 1) \), and because \( P_{\mathbb{R}}(m) \) is a continuous function (since \( m \) is considered over the integers), there is a value \( m_0 \) where \( P_{\mathbb{R}}(m_0) = 0 \). However, since \( P_{\mathbb{R}}(m) \) is a polynomial with integer coefficients, the rational root theorem tells us that any rational root \( m = \dfrac{p}{q} \) must satisfy the divisibility conditions.

As previously discussed, the number of possible integer \( m \) values satisfying the divisibility conditions is severely limited due to the rapid growth of \( a_n \) and \( a_0 \) and the slow growth of their number of divisors.

Therefore, despite the change in sign of \( P_{\mathbb{R}}(m) \), the possible integer values of \( m \) that could be roots are so limited that no integer roots exist for \( m \geq 3 \) and \( k \geq 2 \).

\subsubsection{Completing the Proof}

By correcting the asymptotic analysis and providing a detailed examination of \( P_{\mathbb{R}}(m) \) for large \( m \), we have filled the gap in the previous proof. The asymptotic behavior confirms that \( P_{\mathbb{R}}(m) \) is positive for sufficiently large \( m \), negative for smaller \( m \), and the possible integer rational roots are constrained by the divisibility conditions, which become increasingly restrictive as \( k \) increases. This reinforces the previous statement that no new integer rational roots exist for \( P_{\mathbb{R}}(m) \) when all correction terms are included in the Euler-MacLaurin expansion. The only positive integer solution to the Erdős-Moser equation is \( (k, m) = (1, 3) \).

\section{Conclusion}

Our exploration of the Erd\"{o}s–Moser equation
\[
\sum_{i=1}^{m - 1} i^{k} = m^{k}
\]
via approximation methods has shed light on the difficulties of identifying integer solutions beyond the known case \((k,m) = (1,3)\). In particular, we used the Euler–MacLaurin formula to approximate the discrete sum \(S(m - 1, k)\) with a continuous function \(S_{\mathbb{R}}(m - 1, k)\), allowing us to employ real analysis and polynomial tools.

These methods suggest that solutions with \(k \ge 2\) are extremely rare. Through examining the approximate polynomial \(P_{\mathbb{R}}(m) = S_{\mathbb{R}}(m - 1, k) - m^{k}\) and applying the rational root theorem, our analysis found no additional integer solutions under the approximation framework.

However, because the Euler–MacLaurin approach omits certain correction terms, and exactness is paramount in Diophantine problems, this work does not constitute a conclusive proof of the Erd\"{o}s–Moser conjecture. The neglected terms, though small analytically, may still be decisive at integer values. Thus, we cannot definitively exclude other solutions based solely on our approximate approach.

Thus, our results reinforce the heuristic stance that \((k,m)=(1,3)\) may be the only solution, but a fully rigorous treatment of the omitted terms or an alternative technique would be required to establish this unconditionally. Future investigations, possibly leveraging prime-power constraints or p-adic arguments, could build on these insights and address the remaining uncertainties in a purely number-theoretic fashion.

\subsection{Discussion}

The use of approximation methods, such as the Euler-MacLaurin formula, allowed us to extend the discrete sum to a continuous domain and apply analytical techniques that are otherwise difficult to employ in a purely discrete setting. This approach highlighted the potential of approximation methods to provide insights into complex number-theoretic problems.

However, in the realm of Diophantine equations, exactness is paramount. The discrete nature of the problem means that any approximation, regardless of the smallness of its error term, can lead to incorrect conclusions about the existence of integer solutions. The correction terms from the Euler-MacLaurin formula, while negligible in an analytical sense, could potentially alter the equality \( S(m - 1, k) = m^{k} \) at specific integer points.

Moreover, the application of the rational root theorem to an approximate polynomial may not accurately reflect the properties of the exact equation. The theorem requires the polynomial to have integer coefficients and to represent the exact relationship between variables. Since \( P_{\mathbb{R}}(m) \) is derived from an approximation, its roots may not correspond to those of the exact polynomial \( P(m) = S(m - 1, k) - m^{k} \).

\subsection{Implications and Future Work}

Our work underscores the importance of exact methods in tackling Diophantine equations. Future research could focus on developing exact analytical techniques to express \( S(m - 1, k) \) without resorting to approximations. For instance, utilizing Faulhaber's formula or Bernoulli numbers to obtain exact expressions for the sums of powers could provide a more solid foundation for analysis.

Additionally, computational methods could be employed to test larger ranges of \( k \) and \( m \), potentially identifying new solutions or further supporting the conjecture that \( (k, m) = (1, 3) \) is the only solution. By refining the approach and accounting for all error terms, it may be possible to make more definitive statements about the conjecture.

\subsection{Closing Remarks}

We hope that this study stimulates further investigation into the Erd\"{o}s-Moser equation and inspires new strategies for approaching open problems in number theory. While our methods have limitations, they offer a different perspective on the problem and highlight the intricate balance between sums of powers and individual powers in number theory. We encourage the mathematical community to build upon these insights and continue the search for a definitive resolution of the Erd\"{o}s-Moser conjecture.

\section*{Acknowledgments}

We would like to thank the mathematical community for insightful discussions that inspired this work.

\bibliographystyle{alpha}
\bibliography{main}

\begin{thebibliography}{GMZ11}

\bibitem[Apo13]{apostol2013introduction}
Tom~M Apostol.
\newblock {\em Introduction to analytic number theory}.
\newblock Springer Science \& Business Media, 2013.

\bibitem[BM16]{baoulina2016forbidden}
Ioulia~N Baoulina and Pieter Moree.
\newblock Forbidden integer ratios of consecutive power sums.
\newblock {\em From Arithmetic to Zeta-Functions: Number Theory in Memory of
  Wolfgang Schwarz}, pages 1--30, 2016.

\bibitem[GMZ11]{gallot2011erdHos}
Yves Gallot, Pieter Moree, and Wadim Zudilin.
\newblock The erd{\H{o}}s-moser equation 1 k+ 2
  k+{\textperiodcentered}{\textperiodcentered}{\textperiodcentered}+(m--1) k= m
  k revisited using continued fractions.
\newblock {\em Mathematics of computation}, pages 1221--1237, 2011.

\bibitem[GS63]{gould1963maclaurin}
Henry~W Gould and William Squire.
\newblock Maclaurin's second formula and its generalization.
\newblock {\em The American Mathematical Monthly}, 70(1):44--52, 1963.

\bibitem[Kin06]{King_2006}
Jeremy~D. King.
\newblock 90.63 integer roots of polynomials.
\newblock {\em The Mathematical Gazette}, 90(519):455–456, 2006.

\bibitem[Kno90]{knopp1990theory}
Konrad Knopp.
\newblock {\em Theory and application of infinite series}.
\newblock Courier Corporation, 1990.

\bibitem[Mor13]{moree2013moser}
Pieter Moree.
\newblock Moser's mathemagical work on the equation 1 k+ 2
  k+{\textperiodcentered}{\textperiodcentered}{\textperiodcentered}+(m-1) k= m
  k.
\newblock {\em The Rocky Mountain Journal of Mathematics}, pages 1707--1737,
  2013.

\bibitem[Mos53]{moser1953diophantine}
Leo Moser.
\newblock On the diophantine equation 1n+ 2n+
  3n+{\textperiodcentered}{\textperiodcentered}{\textperiodcentered}+(m- 1) n=
  mn.
\newblock {\em Scripta Math}, 19:84--88, 1953.

\end{thebibliography}

\end{document}